\newtheorem{thm}{Theorem}[section]
\newtheorem{cor}[thm]{Corollary}
\newtheorem{lemma}[thm]{Lemma}
\newtheorem{prop}[thm]{Proposition}
\theoremstyle{definition}
\newtheorem{rem}[thm]{Remark}
\numberwithin{equation}{section}
\newcommand{\Ba}[1]{\begin{array}{#1}}
\newcommand{\Ea}{\end{array}}
\newcommand{\Be}{\begin{equation}}
\newcommand{\Ee}{\end{equation}}
\newcommand{\Bea}{\begin{eqnarray}}
\newcommand{\Eea}{\end{eqnarray}}
\newcommand{\Beas}{\begin{eqnarray*}}
\newcommand{\Eeas}{\end{eqnarray*}}
\newcommand{\Benu}{\begin{enumerate}}
\newcommand{\Eenu}{\end{enumerate}}
\newcommand{\Bi}{\begin{itemize}}
\newcommand{\Ei}{\end{itemize}}
\begin{document}

\titlepage
\title[About a conjecture of Lieb-Solovej]{About a conjecture of Lieb-Solovej}

\author[D. B\'ekoll\`e]{David B\'ekoll\`e}
\address{Department of Mathematics, Faculty of Science, University of Yaound\'e I, P.O. Box 812, Yaound\'e, Cameroon }
\email{{\tt dbekolle@gmail.com}}
\author[J. Gonessa]{Jocelyn Gonessa}
\address{Universit\'e de Bangui, Facult\'e des Sciences, D\'epartement de Math\'ematiques et Informatique, BP. 908, Bangui, R\'epublique Centrafricaine}
\email{\tt gonessa.jocelyn@gmail.com}
\author[B.F. Sehba]{Beno\^it F. Sehba}
\address{Department of Mathematics, University of Ghana, Legon LG 62, Accra, Ghana}
\email{{\tt bfsehba@ug.edu.gh}}
\subjclass[2010]{30H20, 33B15, 20C35}

\keywords{Lieb-Solovej conjecture, Werhl-type entropy inequality for the affine $AX+B$ group, Bergman spaces, Bergman kernel, embedding}

\begin{abstract}
Very recently, E. H. Lieb and J. P. Solovej stated a conjecture about the constant of embedding between two Bergman spaces of the upper-half plane. A question in relation with a Werhl-type entropy inequality for the affine $AX+B$ group. More precisely, that for any holomorphic function $F$ on the upper-half plane $\Pi^+$, $$\int_{\Pi^+}|F(x+iy)|^{2s}y^{2s-2}dxdy\le \frac{\pi^{1-s}}{(2s-1)2^{2s-2}}\left(\int_{\Pi^+}|F(x+iy)|^2 dxdy\right)^s
$$
for $s\ge 1$, and the constant $\frac{\pi^{1-s}}{(2s-1)2^{2s-2}}$ is sharp. We prove differently that the above holds whenever $s$ is an integer and we prove that it holds when $s\rightarrow\infty$. We also prove that when restricted  to powers of the Bergman kernel, the conjecture holds. We next study the case where $s$ is close to $1.$ Hereafter, we transfer the conjecture to the unit disc where we show that the conjecture holds when restricted to analytic monomials. Finally, we overview the bounds we obtain in our attempts to prove the conjecture.
\end{abstract}
\maketitle

\section{Introduction}
Let us denote the upper-half plane by $\Pi^+:=\{x+iy\in \mathbb{C}: y>0\}$. Let $\nu>-1$ and $1\le p<\infty$. We denote by $A^p_\nu (\Pi^+)$ the weighted Bergman space consisting of holomorphic functions $F$ on $\Pi^+$ such that
$$||F||_{A^p_\nu }:=\left (\int_{\Pi^+} |F(x+iy)|^p \hskip 1truemm y^{\nu}dx dy\right )^{\frac 1p} <\infty.$$
In \cite{LS}, in relation with a Werhl-type inequality for the affine $AX+B$ group, E. H. Lieb and J. P. Solovej formulated the following conjecture.
\vskip .1cm
\noindent
{\bf Conjecture:} Let $s\ge 1$. Then for any $F\in A^2(\Pi^+)=A^2_0(\Pi^+)$, the following inequality holds
\Be\label{eq:bsconject}
\int_{\Pi^+}|F(x+iy)|^{2s}y^{2s-2}dxdy\le \frac{\pi^{1-s}}{(2s-1)2^{2s-2}}\left(\int_{\Pi^+}|F(x+iy)|^2 dxdy\right)^s.
\Ee
with equality if $F$ is proportional to $(z-z_0)^{-2}$. 
\vskip .1cm
The above inequality just translates the embedding of $A^2(\Pi^+)$ into $A^{2s}_{2s-2}(\Pi^+)$ which is pretty easy to establish if one does not pay attention to the constant. Theorem 3 in \cite{LS} says that the inequality (\ref{eq:bsconject}) holds when $s$ is an integer. The conjecture is then for the non-integer values of $s$. 
\vskip .1cm
We do not know yet how to prove this conjecture in general but provide a different proof in the case where $s$ is an integer and prove that  in the general case, the constant in (\ref{eq:bsconject}) is sharp. More precisely, we prove the following.
\begin{thm}\label{thm:integercase}
The inequality (\ref{eq:bsconject}) holds for all positive integers $s$. 
\end{thm}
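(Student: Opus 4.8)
Fix a positive integer $s$ and $F\in A^2(\Pi^+)$. The point of the integer case is that $G:=F^s$ is again holomorphic on $\Pi^+$, so the left-hand side of $(\ref{eq:bsconject})$ is exactly $\|F^s\|_{A^2_{2s-2}}^2$, and $(\ref{eq:bsconject})$ becomes the embedding estimate $\|F^s\|_{A^2_{2s-2}}^2\le \frac{\pi^{1-s}}{(2s-1)2^{2s-2}}\|F\|_{A^2}^{2s}$. I would pass to the Fourier--Laplace (Paley--Wiener) realization of these Bergman spaces: an element $H$ of $A^2_\nu(\Pi^+)$ is of the form $H(x+iy)=\int_0^\infty e^{i(x+iy)t}h(t)\,dt$ with $h$ supported on $(0,\infty)$, and Plancherel in the $x$-variable together with $\int_0^\infty y^\nu e^{-2yt}\,dy=\Gamma(\nu+1)(2t)^{-\nu-1}$ yields the identity $\|H\|_{A^2_\nu}^2=\frac{2\pi\,\Gamma(\nu+1)}{2^{\nu+1}}\int_0^\infty |h(t)|^2 t^{-\nu-1}\,dt$. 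In particular, if $g$ is the density of $F$ then $\|F\|_{A^2}^2=\pi\int_0^\infty |g(t)|^2 t^{-1}\,dt$, while the density of $F^s$ is the $s$-fold convolution $g^{*s}$, still supported on $(0,\infty)$. Substituting these identities and cancelling the common powers of $\pi$ and $2$, the inequality is equivalent to
\[
(2s-1)!\int_0^\infty \frac{|g^{*s}(t)|^2}{t^{2s-1}}\,dt\ \le\ \left(\int_0^\infty \frac{|g(t)|^2}{t}\,dt\right)^{\!s}
\]
for every $g$ supported on $(0,\infty)$; this is the statement I would actually prove.

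To prove it, I would slice along the level sets of the total mass. Write $g^{*s}(t)=\int_{\Delta_s(t)}\prod_{i=1}^{s} g(t_i)\,d\sigma_t$, where $\Delta_s(t)=\{(t_1,\dots,t_s):t_i>0,\ \sum_i t_i=t\}$ and $d\sigma_t=dt_1\cdots dt_{s-1}$ (with $t_s=t-t_1-\cdots-t_{s-1}$). Factoring $\prod_i g(t_i)=\big(\prod_i g(t_i)t_i^{-1/2}\big)\cdot\big(\prod_i t_i^{1/2}\big)$ and applying the Cauchy--Schwarz inequality on $\Delta_s(t)$ gives
\[
|g^{*s}(t)|^2\ \le\ \left(\int_{\Delta_s(t)}\prod_{i=1}^{s}\frac{|g(t_i)|^2}{t_i}\,d\sigma_t\right)\left(\int_{\Delta_s(t)}\prod_{i=1}^{s}t_i\,d\sigma_t\right).
\]
The second factor is a Dirichlet integral, $\int_{\Delta_s(t)}\prod_i t_i\,d\sigma_t=t^{2s-1}\Gamma(2)^s/\Gamma(2s)=t^{2s-1}/(2s-1)!$, so after dividing by $t^{2s-1}$ the right-hand side loses all $t$-dependence except the constant $1/(2s-1)!$. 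Integrating in $t$ and recombining the slices of the simplex into $(0,\infty)^s$ by Tonelli's theorem (decomposing according to the value of $\sum_i t_i$), one gets
\[
\int_0^\infty\frac{|g^{*s}(t)|^2}{t^{2s-1}}\,dt\ \le\ \frac1{(2s-1)!}\int_{(0,\infty)^s}\prod_{i=1}^{s}\frac{|g(t_i)|^2}{t_i}\,dt_1\cdots dt_s=\frac1{(2s-1)!}\left(\int_0^\infty\frac{|g(t)|^2}{t}\,dt\right)^{\!s},
\]
which is the required inequality. Tracking the constants back through the two Paley--Wiener identities produces exactly $\frac{2\pi\Gamma(2s-1)}{2^{2s-1}}\cdot\frac{1}{(2s-1)!}\cdot\pi^{-s}=\frac{\pi^{1-s}}{(2s-1)2^{2s-2}}$.

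For the equality case, equality in the slice-wise Cauchy--Schwarz forces $\prod_i g(t_i)t_i^{-1}$ to depend on $(t_1,\dots,t_s)$ only through $\sum_i t_i$, which (already $s=2$ suffices) means $g(t)=c\,t\,e^{izt}$ for some $c\in\C$ and $z\in\Pi^+$; then $F(w)=c\int_0^\infty t\,e^{i(w+z)t}\,dt=-c\,(w+z)^{-2}$, i.e. $F$ is proportional to $(w-w_0)^{-2}$ with $w_0=-z$ in the lower half-plane, in agreement with the conjecture, and all inequalities above become equalities for this $F$, so the constant is sharp. I expect the only genuinely delicate part to be the analytic bookkeeping on the Laplace side --- that a general $F\in A^2(\Pi^+)$ admits such a representation, that the density of $F^s$ really is $g^{*s}$, and that the Cauchy--Schwarz/Fubini steps are legitimate. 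I would circumvent this by first carrying out the argument for the dense class of $F$ whose density $g$ lies in $C_c^\infty((0,\infty))$ --- where $g^{*s}\in C_c^\infty((0,\infty))$ and every integral above converges absolutely --- and then passing to a general $F\in A^2(\Pi^+)$ by approximation, using Fatou's lemma (which goes in the right direction, since the approximate statement is an inequality rather than an identity) together with locally uniform convergence of the approximants and of their $s$-th powers.
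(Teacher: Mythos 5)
Your proposal is correct and is essentially the paper's own proof: both pass to the Paley--Wiener representation, write the density of $F^s$ as the $s$-fold convolution $g^{*s}$, apply Cauchy--Schwarz on the simplex slices with the weights $\prod_i t_i^{1/2}$, evaluate the Dirichlet integral $\int_{\Delta_s(t)}\prod_i t_i\,d\sigma_t = t^{2s-1}/\Gamma(2s)$, and recombine by Tonelli. The only cosmetic differences are that the paper obtains this Dirichlet integral by a short induction (its Lemma~\ref{keylemma}) rather than quoting the closed form, and that it does not spell out the density/approximation bookkeeping or the equality discussion you append.
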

In fact, Bayart, Brevig, Haimi, Ortega-Cerd\`a and Perfekt \cite{BBHOP} also proved the same theorem and even settled the conjecture for $s=n+\frac 12 \hskip 2truemm (n=1, 2, 3,\cdots).$ Their setting is the unit disc and we shall state their results precisely in Section 4, where we transfer the conjecture from the upper half-plane to the unit disc. Analogous conjectures on contractive inequalities for Hardy spaces were studied in \cite{BOSZ}. Earlier sharp inequalities were obtained in \cite{B}.
\vskip .1cm
In the sequel, for short, we adopt the following notation
$$C_s=\frac {\pi^{1-s}}{(2s-1)2^{2s-2}}.$$

We next prove that the above result holds whenever $s\rightarrow\infty$. 
\begin{prop}
The conjecture of Lieb-Solovej is asymptotically true, in the sense that
\begin{equation}
\lim \limits_{s\rightarrow \infty}
\frac {\max \limits_{F\in A^2 (\Pi^+), F\not \equiv 0} \frac {\int_{\Pi^+} |F(x+iy)|^{2s}y^{2s-2}dxdy}{\left (\int_{\Pi^+} |F(x+iy)|^2dxdy \right )^s}}{\frac {\pi^{1-s}}{(2s-1)2^{2s-2}}}=1.
\end{equation}
\end{prop}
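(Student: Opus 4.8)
The plan is to squeeze the quantity
\[
M_s:=\max_{F\in A^2(\Pi^+),\,F\not\equiv 0}\frac{\int_{\Pi^+}|F(x+iy)|^{2s}y^{2s-2}\,dx\,dy}{\left(\int_{\Pi^+}|F(x+iy)|^2\,dx\,dy\right)^s}
\]
between $C_s$ from below and $C_n(4\pi)^{\,n-s}$ from above, where $n=\lfloor s\rfloor$, and then to observe that the upper bound equals $(1+o(1))C_s$ as $s\to\infty$. Throughout I abbreviate $\|F\|^2:=\int_{\Pi^+}|F(x+iy)|^2\,dx\,dy$.

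For the lower bound I would simply test the conjectured extremizer $F(z)=(z-z_0)^{-2}$; since the ratio defining $M_s$ is invariant under the translations $z\mapsto z+t$ $(t\in\R)$ and the dilations $z\mapsto\lambda z$ $(\lambda>0)$, one may take $z_0=-i$. Integrating first in $x$ and then in $y$ (two elementary Beta integrals) gives $\|F\|^2=\pi/4$ and $\int_{\Pi^+}|F|^{2s}y^{2s-2}\,dx\,dy=\sqrt\pi\,\Gamma(2s-\tfrac12)\Gamma(2s-1)/\Gamma(4s-1)$. Applying the Legendre duplication formula to $\Gamma(4s-1)=\Gamma\bigl(2(2s-\tfrac12)\bigr)$ collapses the quotient to exactly $C_s$, so $M_s\ge C_s$ for every $s\ge1$.

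For the upper bound the only ingredient needed is the pointwise estimate coming from the unweighted Bergman kernel $K(z,w)=-\pi^{-1}(z-\bar w)^{-2}$ of $A^2(\Pi^+)$: Cauchy--Schwarz in the reproducing identity $F(z)=\langle F,K(\cdot,z)\rangle$ gives $|F(z)|^2\le K(z,z)\,\|F\|^2=\tfrac{1}{4\pi y^2}\|F\|^2$, that is, $|F(z)|^2y^2\le\tfrac{1}{4\pi}\|F\|^2$. Writing $n=\lfloor s\rfloor\ge1$ and factoring $|F|^{2s}y^{2s-2}=|F|^{2n}y^{2n-2}\,(|F|^2y^2)^{\,s-n}$, I would bound the last factor by its supremum and then invoke Theorem~\ref{thm:integercase} for the integer $n$:
\[
\int_{\Pi^+}|F|^{2s}y^{2s-2}\,dx\,dy\le\Bigl(\tfrac{1}{4\pi}\Bigr)^{s-n}\|F\|^{2(s-n)}\!\int_{\Pi^+}|F|^{2n}y^{2n-2}\,dx\,dy\le C_n(4\pi)^{\,n-s}\,\|F\|^{2s}.
\]
Hence $M_s\le C_n(4\pi)^{\,n-s}$.

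To conclude, rewrite $C_t=\frac{4\pi}{2t-1}(4\pi)^{-t}$; then $C_n(4\pi)^{\,n-s}/C_s=(2s-1)/(2n-1)$, which for $n=\lfloor s\rfloor$ lies in $\bigl[1,\tfrac{2n+1}{2n-1}\bigr)$ and hence tends to $1$ as $s\to\infty$. Combining the two bounds, $1\le M_s/C_s\le(2s-1)/(2\lfloor s\rfloor-1)\to 1$, which is the asserted limit. The only non-elementary input is Theorem~\ref{thm:integercase} (the integer case); granting it, the argument has no real obstacle, and the only points demanding care are the bookkeeping of the constant $C_s$ and the duplication-formula step in the extremizer computation.
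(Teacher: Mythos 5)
Your argument is correct and the upper bound is essentially the paper's: factor $|F|^{2s}y^{2s-2}=|F|^{2n}y^{2n-2}\,(|F|y)^{2(s-n)}$, bound the last factor pointwise via $\sup_{\Pi^+}|F(x+iy)|y\le (2\sqrt\pi)^{-1}\|F\|_{A^2}$ (Corollary~\ref{cor:pointwiseimproved}), then invoke the integer case; both you and the paper land on $\Phi(s)\le\frac{2s-1}{2n-1}$. Where you diverge is the lower bound. The paper runs the \emph{same} pointwise factoring in the opposite direction, writing $|F|^{2(n+1)}y^{2n}=|F|^{2s}y^{2s-2}\,(|F|y)^{2(n+1-s)}$ and using $\Phi(n+1)=1$ to get $\Phi(s)\ge\frac{2s-1}{2n+1}$, which is strictly less than $1$ on $(n,n+1)$. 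You instead plug the conjectured extremizer $(z+i)^{-2}$ into the ratio and compute directly (your Beta-integral and duplication computations check out; the integral indeed equals $\frac{\pi}{2^{4s-2}(2s-1)}$ and the $A^2$-norm squared is $\pi/4$, giving exactly $C_s$), so you obtain the sharper and cleaner bound $\Phi(s)\ge1$ for all $s\ge1$. That is a genuine, if modest, improvement: besides yielding the limit, it already shows the conjectured constant cannot be lowered, which the paper's two-sided pinch does not by itself make explicit. The rest — the identity $C_n(4\pi)^{n-s}/C_s=(2s-1)/(2n-1)$ and the squeeze — is routine and matches the paper.
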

We also obtain that when restricted only to powers of the Bergman kernel, the conjecture holds.
\begin{thm}
\begin{equation}\label{power}
\int_{\Pi^+} \frac {y^{2s-2}}{\left \vert x+i(y+1)\right \vert^{2rs}}dxdy\leq C_s \left (\int_{\Pi^+} \frac {dxdy}{\left \vert x+i(y+1)\right \vert^{2r}}   \right )^s.
\end{equation}
Moreover, equality holds in (\ref{power}) if and only if $r=2.$
\end{thm}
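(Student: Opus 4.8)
The plan is to compute both sides of (\ref{power}) in closed form, reduce the resulting inequality to a clean comparison of Gamma functions, and then establish that comparison via a monotonicity in $s$. Writing $F(z)=(z+i)^{-r}$, so that $|F(x+iy)|^{2}=(x^{2}+(y+1)^{2})^{-r}$, both integrals are elementary: integrating first in $x$ using $\int_{\R}(x^{2}+a^{2})^{-\beta}\,dx=\sqrt{\pi}\,\Gamma(\beta-\tfrac12)\Gamma(\beta)^{-1}a^{1-2\beta}$ and then in $y$ using the Euler Beta integral $\int_{0}^{\infty}y^{p-1}(1+y)^{-q}\,dy=B(p,q-p)$, one gets, for $r>1$,
\[
\int_{\Pi^+}\frac{dxdy}{|x+i(y+1)|^{2r}}=\frac{\sqrt{\pi}\,\Gamma(r-\tfrac12)}{2(r-1)\Gamma(r)},\qquad
\int_{\Pi^+}\frac{y^{2s-2}\,dxdy}{|x+i(y+1)|^{2rs}}=\frac{\sqrt{\pi}\,\Gamma(rs-\tfrac12)\,\Gamma(2s-1)\,\Gamma(2s(r-1))}{\Gamma(rs)\,\Gamma(2rs-1)}.
\]
(For $r\le1$ both sides of (\ref{power}) are infinite, so there is nothing to prove.)

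Substituting these into (\ref{power}) and using only $\Gamma(z+1)=z\Gamma(z)$ and the Legendre duplication formula $\Gamma(2z)=2^{2z-1}\pi^{-1/2}\Gamma(z)\Gamma(z+\tfrac12)$, all factors $\pi^{\pm1/2}$, all powers of $2$, and the factors $\Gamma(rs\pm\tfrac12)$ cancel, and (\ref{power}) becomes \emph{equivalent} to
\[
\Psi(r,s):=\frac{\Gamma(2s)\,\Gamma(2s(r-1))}{\Gamma(rs)^{2}}\ \le\ \Psi(r,1)^{s},\qquad \Psi(r,1)=\frac{\Gamma(2(r-1))}{\Gamma(r)^{2}},
\]
where the identification of the right-hand constant, $\tfrac{2^{2r-3}\Gamma(r-\frac12)}{\sqrt{\pi}(r-1)\Gamma(r)}=\Psi(r,1)$, is once more the duplication formula. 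Since $\Psi(2,s)\equiv1$, equality at $r=2$ is already visible. Taking logarithms, it remains to prove that $G(r,s):=\tfrac1s\log\Psi(r,s)$ satisfies $G(r,s)\le G(r,1)$ for $s\ge1$, strictly unless $r=2$.

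For this I would use the classical integral representation of $\log\Gamma$ (Malmsten's formula). In the combination $\log\Gamma(2s)+\log\Gamma(2s(r-1))-2\log\Gamma(rs)$ the polynomial part cancels (the relevant coefficients sum to zero), leaving
\[
\log\Psi(r,s)=\int_{0}^{\infty}\frac{e^{-2st}+e^{-2s(r-1)t}-2e^{-rst}}{t(1-e^{-t})}\,dt=\int_{0}^{\infty}\frac{\bigl(e^{-st}-e^{-(r-1)st}\bigr)^{2}}{t(1-e^{-t})}\,dt\ \ge\ 0.
\]
The substitution $t=u/s$ turns this into
\[
G(r,s)=\int_{0}^{\infty}\frac{\bigl(e^{-u}-e^{-(r-1)u}\bigr)^{2}}{u}\cdot\frac{du}{\,s(1-e^{-u/s})\,}.
\]
Now $s(1-e^{-u/s})=u\cdot\dfrac{1-e^{-u/s}}{u/s}$, and since $x\mapsto\dfrac{1-e^{-x}}{x}$ is strictly decreasing on $(0,\infty)$, the quantity $s(1-e^{-u/s})$ increases with $s$ for each fixed $u>0$; hence the last integrand decreases in $s$, and so does $G(r,s)$, strictly unless $\bigl(e^{-u}-e^{-(r-1)u}\bigr)^{2}\equiv0$, i.e.\ unless $r=2$. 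Thus $G(r,s)\le G(r,1)$ for $s\ge1$, which is exactly (\ref{power}), with equality (for $s>1$) precisely when $r=2$; the case $s=1$ of (\ref{power}) is a trivial identity.

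The only genuinely delicate step is the Gamma-function bookkeeping in the reduction: tracking the arguments $rs\pm\tfrac12$, $2s-1$, $2rs-1$ and checking that every power of $2$ and $\pi$ disappears, leaving the symmetric form $\Psi(r,s)\le\Psi(r,1)^{s}$. After that the argument is short — Malmsten's formula, the perfect square $\bigl(e^{-st}-e^{-(r-1)st}\bigr)^{2}$, and the elementary monotonicity of $(1-e^{-x})/x$ — the one routine verification being that the integral representation converges at $t=0$, which holds because the numerator there is $O(t^{2})$.
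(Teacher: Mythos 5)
Your proof is correct, and after the (shared) reduction to the Gamma--function inequality $\Gamma(2s)\Gamma(2s(r-1))/\Gamma(rs)^{2}\le\bigl(\Gamma(2(r-1))/\Gamma(r)^{2}\bigr)^{s}$ — which you obtain by evaluating both integrals directly, exactly as the paper does via its Proposition 2.1 — your key argument is genuinely different from the paper's. The paper fixes $s>1$ and proves that $u\mapsto\log\Gamma(us)-s\log\Gamma(u)$ is strictly concave on $(0,\infty)$; the inequality then follows from midpoint concavity at the points $2$ and $2(r-1)$, whose midpoint is $r$. The concavity rests on Lemma 3.3 (the function $t\mapsto t\psi'(t)$ is strictly decreasing), whose proof occupies most of the section and uses the recurrences and asymptotics of the trigamma and tetragamma functions. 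You instead fix $r$ and vary $s$: Malmsten's formula turns $\log\Psi(r,s)$ into $\int_{0}^{\infty}\bigl(e^{-st}-e^{-(r-1)st}\bigr)^{2}\,t^{-1}(1-e^{-t})^{-1}dt$ (the linear terms cancel because $2s+2s(r-1)-2rs=0$), and the substitution $t=u/s$ together with the elementary fact that $(1-e^{-x})/x$ decreases shows $\tfrac1s\log\Psi(r,s)$ is decreasing in $s$. Your route avoids the digamma machinery entirely, and the perfect-square numerator makes both the sign of $\log\Psi$ and the characterization of equality ($r=2$ being exactly the vanishing of $e^{-u}-e^{-(r-1)u}$) immediately visible; the paper's route isolates a reusable concavity statement about $\log\Gamma(us)-s\log\Gamma(u)$ but at the cost of a longer technical lemma. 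Both arguments deliver the full ``equality iff $r=2$'' claim. The only step of yours I would ask you to write out in a final version is the bookkeeping you yourself flag — the cancellation of the factors $\Gamma(rs\pm\tfrac12)$, $\sqrt\pi$ and powers of $2$ via the duplication formula — since the equivalence with $\Psi(r,s)\le\Psi(r,1)^{s}$ is where an error would most easily hide; I have checked that it does go through.
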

A direct consequence of Theorem 1.3 is the following.
\begin{cor}
May the Lieb-Solovej conjecture be true, the Bergman kernel functions $\frac 1\pi (z-\bar z_0)^{2}\hskip 2truemm (z_0\in \Pi^+)$ will be  maximizing functions of this extremum problem.
\end{cor}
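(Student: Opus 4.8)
The plan is to deduce the corollary at once from the \emph{equality} assertion in Theorem 1.3 together with the conjectured sharp bound. Throughout, write
\[
R(F)=\frac{\int_{\Pi^+}|F(x+iy)|^{2s}y^{2s-2}\,dxdy}{\left(\int_{\Pi^+}|F(x+iy)|^{2}\,dxdy\right)^{s}},\qquad F\in A^2(\Pi^+),\ F\not\equiv 0,
\]
so that the Lieb--Solovej conjecture is exactly the statement that $\sup_F R(F)=C_s$, the supremum being taken over all admissible $F$, and the extremum problem of Proposition 1.2 asks for the functions realizing this supremum.

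First I would record the affine invariance of $R$. For $\phi(z)=az+b$ with $a>0$ and $b\in\R$, put $(U_\phi F)(z)=a\,F(az+b)$. Changing variables by $w=az+b$ (so that $\mathrm{Im}\,w=a\,\mathrm{Im}\,z$ and $dudv=a^{2}\,dxdy$), one checks that the numerator and the denominator of $R$ are each left unchanged by $F\mapsto U_\phi F$; the powers of $a$ cancel because $2s-(2s-2)-2=0$. Hence $R(U_\phi F)=R(F)$, and since $R$ is obviously invariant under multiplication by nonzero constants, $R$ is constant along the whole orbit $\{\,c\,(z-\bar z_0)^{-2}:c\in\C\setminus\{0\},\ z_0\in\Pi^+\,\}$. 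Indeed $U_\phi\bigl[(z+i)^{-2}\bigr]=a^{-1}(z-\bar z_0)^{-2}$ with $z_0=\tfrac{-b+i}{a}$, and $z_0$ runs over all of $\Pi^+$ as $(a,b)$ runs over $(0,\infty)\times\R$.

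Next I would evaluate $R$ on one Bergman kernel function. Take $F_0(x+iy)=\tfrac1\pi\bigl(x+i(y+1)\bigr)^{-2}$, i.e. $F_0(z)=\tfrac1\pi(z+i)^{-2}$. Since $|F_0(x+iy)|^{2s}=\pi^{-2s}|x+i(y+1)|^{-4s}$ and $|F_0(x+iy)|^{2}=\pi^{-2}|x+i(y+1)|^{-4}$, the factors of $\pi$ cancel in the quotient, and $R(F_0)$ equals the ratio of the two integrals occurring in (\ref{power}) for $r=2$. By the equality part of Theorem 1.3 (the case $r=2$), that ratio is exactly $C_s$; therefore $R(F_0)=C_s$, and, by the invariance of the previous paragraph, $R\bigl(\tfrac1\pi(z-\bar z_0)^{-2}\bigr)=C_s$ for every $z_0\in\Pi^+$.

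Finally, assuming the conjecture, i.e. $R(F)\le C_s$ for every admissible $F$, I would combine this with the previous step to get $\sup_F R(F)=C_s=R\bigl(\tfrac1\pi(z-\bar z_0)^{-2}\bigr)$, so each function $\tfrac1\pi(z-\bar z_0)^{-2}$, $z_0\in\Pi^+$, realizes the supremum and is thus a maximizing function, which is the claim. I expect no genuine obstacle here; the result is immediate once Theorem 1.3 is in hand. The only points worth underlining are that the argument leans on the \emph{equality} half of Theorem 1.3 (the inequality alone would merely say that powers of the Bergman kernel cannot exceed $C_s$, not that they attain it), and that, as phrased, the corollary only asserts that these functions are maximizers; deciding whether they are the \emph{only} maximizers would require more than the conjectured inequality.
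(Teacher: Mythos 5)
Your proof is correct and follows essentially the same route as the paper, which presents the corollary as a direct consequence of the equality case $r=2$ of Theorem 1.3 combined with the conjectured bound $C_s$. The only addition is that you spell out the affine invariance of the quotient $R$ needed to pass from $z_0=i$ to an arbitrary $z_0\in\Pi^+$, a step the paper leaves implicit.
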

We finally provide an equivalent form for the Lieb-Solovej conjecture for $s$ close to $1.$
\begin{thm}
Let $F$ be a holomorphic function in $\Pi^+$ such that
\begin{equation*}
\int_{\Pi^+} |F(x+iy)|^2dxdy =1.
\end{equation*}
The following two assertions are equivalent.
\begin{enumerate}
\item
the Lieb-Solovej conjecture is valid for $s$ close to $1,$ i.e. there exists $s_0>1$ such that for every $s\in (1, s_0),$  we have
\begin{equation}\label{local}
\int_{\Pi^+}|F(x+iy)|^{2s}y^{2s-2}dxdy\le \frac{\pi^{1-s}}{(2s-1)2^{2s-2}}.
\end{equation}
\item
\begin{equation}\label{step}
\int_{\Pi^+} \log \left [\frac 1{2\sqrt \pi|F(x+iy)|y}  \right ]|F(x+iy)|^{2}dxdy\geq 1
\end{equation}
\end{enumerate}
\end{thm}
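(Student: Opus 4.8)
The key observation is that both sides of inequality (\ref{local}) equal $1$ when $s=1$ (the left side by the normalization hypothesis $\int_{\Pi^+}|F|^2\,dxdy=1$, the right side since $C_1=\frac{\pi^0}{1\cdot 2^0}=1$). So the inequality (\ref{local}) for $s$ close to $1$ is a statement about the behavior of the function
\[
\Phi(s):=\int_{\Pi^+}|F(x+iy)|^{2s}y^{2s-2}\,dxdy
\qquad\text{versus}\qquad
\Psi(s):=\frac{\pi^{1-s}}{(2s-1)2^{2s-2}}
\]
near $s=1$, where $\Phi(1)=\Psi(1)=1$. The natural strategy is therefore to differentiate at $s=1$: the conjecture for $s$ slightly bigger than $1$ should be controlled by comparing $\Phi'(1)$ with $\Psi'(1)$, and assertion (2) should be exactly the inequality $\Phi'(1)\le\Psi'(1)$ rewritten. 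First I would compute $\Psi'(1)$: writing $\log\Psi(s)=(1-s)\log\pi-\log(2s-1)-(2s-2)\log 2$, one gets $\frac{\Psi'(1)}{\Psi(1)}=-\log\pi-2-2\log 2=-\log(4\pi)-2$, so $\Psi'(1)=-\log(4\pi)-2$. Next I would compute $\Phi'(1)$ by differentiating under the integral sign: since $\frac{d}{ds}\big(|F|^{2s}y^{2s-2}\big)=|F|^{2s}y^{2s-2}\cdot 2\log(|F|y)$, we get $\Phi'(1)=2\int_{\Pi^+}\log(|F(x+iy)|y)\,|F(x+iy)|^2\,dxdy$.

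The heart of the argument is then to show that $\Phi(s)\le\Psi(s)$ for $s\in(1,s_0)$ is \emph{equivalent} to $\Phi'(1)\le\Psi'(1)$. One direction is immediate: if (\ref{local}) holds for $s\in(1,s_0)$ then, since $\Phi(1)=\Psi(1)$, dividing $\Phi(s)-\Psi(s)\le 0$ by $s-1>0$ and letting $s\downarrow 1$ gives $\Phi'(1)\le\Psi'(1)$ (the one-sided derivative; here I should note $\Phi$ is differentiable, or at least use the right derivative). For the converse direction—deducing the inequality for a whole interval $(1,s_0)$ from the derivative inequality at the single point $s=1$—I would exploit convexity: the function $s\mapsto\log\Phi(s)$ is convex on $[1,\infty)$ (it is $\log$ of an integral of the form $\int e^{2s\log(|F|y)}\,y^{-2}\,dxdy$, hence a log-moment-generating-type function, convex in $s$ by Hölder/Cauchy–Schwarz), and so is $s\mapsto\log\Psi(s)$ — actually $\log\Psi$ is \emph{concave} since $-\log(2s-1)$ is convex but the whole thing... let me instead argue directly: $\log\Psi(s)=(1-s)\log\pi-\log(2s-1)-(2s-2)\log2$ has second derivative $\frac{4}{(2s-1)^2}>0$, so $\log\Psi$ is convex too. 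This means I cannot simply compare convex-to-convex; instead I would proceed as follows. Set $g(s)=\log\Phi(s)-\log\Psi(s)$, so $g(1)=0$ and I want $g(s)\le 0$ near $1^+$. Here convexity of $\log\Phi$ gives $\log\Phi(s)\le$ nothing useful from above; rather I use that convexity of $\log\Phi$ implies $\log\Phi(s)\ge\log\Phi(1)+(s-1)(\log\Phi)'(1)$, the wrong direction.

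The cleaner route, which I expect to be the one the authors take, is to avoid claiming the full interval statement follows from the derivative alone and instead observe that the \emph{negation} is what's easy: if $\Phi'(1)>\Psi'(1)$ then $g'(1)>0$ with $g(1)=0$ forces $g(s)>0$ for $s$ slightly above $1$, violating (\ref{local}); conversely if $\Phi'(1)<\Psi'(1)$ strictly then $g(s)<0$ near $1^+$ and (\ref{local}) holds on some $(1,s_0)$; the boundary case $\Phi'(1)=\Psi'(1)$ can be absorbed because (\ref{step}) is stated as a non-strict inequality and one checks directly (using strict convexity, $4/(2s-1)^2>0$, of $\log\Psi$ together with convexity of $\log\Phi$) that equality of derivatives still yields $g(s)\le 0$ on a one-sided neighborhood — indeed $g''=(\log\Phi)''-(\log\Psi)''$ and $(\log\Psi)''=4/(2s-1)^2$ while $(\log\Phi)''(1)$ is finite, but this needs care. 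Finally I would unwind the derivative inequality $\Phi'(1)\le\Psi'(1)$: it reads
\[
2\int_{\Pi^+}\log\bigl(|F(x+iy)|y\bigr)\,|F(x+iy)|^2\,dxdy\;\le\;-\log(4\pi)-2,
\]
i.e., using $\int|F|^2=1$ to rewrite the constant as $-2\int|F|^2\log(2\sqrt\pi)-2\int|F|^2$... let me just say: dividing by $2$ and moving terms,
\[
\int_{\Pi^+}\Bigl[-\log\bigl(|F(x+iy)|y\bigr)-\log(2\sqrt\pi)\Bigr]|F(x+iy)|^2\,dxdy\;\ge\;1,
\]
which is precisely $\int_{\Pi^+}\log\!\big[\tfrac{1}{2\sqrt\pi\,|F(x+iy)|\,y}\big]|F(x+iy)|^2\,dxdy\ge 1$, establishing (\ref{step}). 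The main obstacle is the rigorous justification of differentiation under the integral sign near $s=1$ (one needs local integrability of $|F|^{2s}y^{2s-2}\log(|F|y)$ uniformly for $s$ in a neighborhood of $1$, which requires knowing $\Phi(s)<\infty$ for $s$ in an open interval around $1$ — and whether $\Phi(s)<\infty$ for some $s<1$ is itself nontrivial, so one may have to work with right-hand derivatives throughout), together with the delicate handling of the borderline case where the two derivatives at $s=1$ coincide.
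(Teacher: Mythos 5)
Your proposal is in essence the paper's own argument: both reduce matters to the sign of $\varphi'(1)$, where $\varphi(s)=\log\int_{\Pi^+}|F|^{2s}y^{2s-2}\,dxdy-\log\frac{\pi^{1-s}}{(2s-1)2^{2s-2}}$ satisfies $\varphi(1)=0$, and your computations ($\Psi'(1)=-\log(4\pi)-2$, $\Phi'(1)=2\int\log(|F|y)\,|F|^2$, and the rewriting of $\Phi'(1)\le\Psi'(1)$ as assertion (2)) agree with the paper's; the correct value of $\varphi'(1)$ is indeed $2\int\log(|F|y)|F|^2+\log\pi+2+2\log 2$, matching yours (the ``$+1$'' in the paper's display is a typo, as its final equivalence with (2) confirms). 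Two points of comparison. For (1)$\Rightarrow$(2) the paper does not differentiate under the integral: it reads (1) as the bound $\bigl(\int(|F|y)^{2s-2}|F|^2\,dxdy\bigr)^{1/(2s-2)}\le (2\sqrt\pi)^{-1}(2s-1)^{-1/(2s-2)}$ on the $L^{2s-2}$-mean of $|F(x+iy)|y$ with respect to the probability measure $|F(x+iy)|^2\,dxdy$, and lets $s\downarrow 1$ using the classical fact that $L^p$-means tend to the geometric mean $\exp\bigl(\int\log(|F|y)|F|^2\,dxdy\bigr)$ as $p\downarrow 0$; since $(2s-1)^{1/(2s-2)}\to e$ this yields (2) directly, sidestepping the differentiation-under-the-integral issues you raise and handling gracefully the case where $\int\log(|F|y)|F|^2=-\infty$. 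For (2)$\Rightarrow$(1), the obstruction you identify is genuine: $\varphi(1)=0$ together with the non-strict inequality $\varphi'(1)\le 0$ does not force $\varphi\le 0$ on a right-neighbourhood of $1$ (consider $\varphi'(1)=0$ with $\varphi''(1)>0$), and your attempted repair via convexity cannot close this, since, as you yourself observe, $\log\Phi$ and $\log\Psi$ are both convex, so no useful second-derivative comparison is available. You should be aware, however, that the paper's proof contains exactly the same unaddressed step --- it simply asserts ``since $\varphi(1)=0$, it suffices to prove that $\varphi'(1)\le 0$'' --- so your proposal reproduces the published argument, borderline case included, and your honesty about that gap is a point in your favour rather than a defect relative to the paper.
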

Unfortunately, we only succeeded in proving the following partial result.
 \begin{thm}
Let $F\in A^2 (\Pi^+)$ such that $\left \Vert F\right \Vert_{A^2 (\Pi^+)}=1.$ We have
$$\int_{\Pi^+} \log \left [\frac 1{2\sqrt \pi|F(x+iy)|y}  \right ]|F(x+iy)|^{2}dxdy\geq \frac {\log 3}2\approx 0.5493.$$
\end{thm}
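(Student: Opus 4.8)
The plan is to derive this from the case $s=2$ of Theorem~\ref{thm:integercase} together with a single use of Jensen's inequality. Since $\|F\|_{A^2(\Pi^+)}=1$, the diagonal of the Bergman kernel of $A^2(\Pi^+)$ gives the pointwise bound $|F(x+iy)|^2\le \frac1{4\pi y^2}$ on $\Pi^+$; equivalently $\varphi(x+iy):=4\pi y^2|F(x+iy)|^2\le 1$, so the integrand in the asserted inequality is nonnegative and
\[
\int_{\Pi^+}\log\left[\frac1{2\sqrt\pi|F(x+iy)|y}\right]|F(x+iy)|^2\,dxdy=\frac12\int_{\Pi^+}\log\frac1{\varphi(x+iy)}\,|F(x+iy)|^2\,dxdy.
\]
If the left-hand side is infinite there is nothing to prove, so assume it is finite. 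Note that $d\nu:=|F(x+iy)|^2\,dxdy$ is a probability measure on $\Pi^+$.

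The next step is to apply Jensen's inequality for the concave function $\log$ and the probability measure $\nu$, which moves the logarithm outside the integral:
\[
\int_{\Pi^+}\log\frac1{\varphi}\,d\nu=-\int_{\Pi^+}\log\varphi\,d\nu\ \ge\ -\log\int_{\Pi^+}\varphi\,d\nu=-\log\left(4\pi\int_{\Pi^+}|F(x+iy)|^4y^2\,dxdy\right),
\]
where the last equality uses $d\nu=|F|^2\,dxdy$. The matter is thereby reduced to an \emph{upper} estimate for $\int_{\Pi^+}|F(x+iy)|^4y^2\,dxdy$, which is exactly the quantity governed by the Lieb--Solovej inequality at $s=2$.

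Finally, Theorem~\ref{thm:integercase} with $s=2$ and $\int_{\Pi^+}|F|^2\,dxdy=1$ gives $\int_{\Pi^+}|F(x+iy)|^4y^2\,dxdy\le C_2=\frac1{12\pi}$, hence $4\pi\int_{\Pi^+}|F|^4y^2\,dxdy\le\frac13$ and therefore $-\log\left(4\pi\int_{\Pi^+}|F|^4y^2\,dxdy\right)\ge\log 3$. Combining this with the two displays above yields $\int_{\Pi^+}\log\left[\frac1{2\sqrt\pi|F|y}\right]|F|^2\,dxdy\ge\frac12\log 3=\frac{\log3}2$, as claimed. The argument itself presents no real difficulty; its weakness — and the actual obstacle — is that Jensen's inequality is wasteful here (equality would require $y^2|F|^2$ to be $\nu$-almost everywhere constant, which is impossible) and that only the input $s=2$ is exploited. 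Running the same computation with $\log\frac1\varphi=\frac1{s-1}\log\frac1{\varphi^{s-1}}$ and Theorem~\ref{thm:integercase} at an arbitrary integer $s\ge2$ produces the lower bound $\frac{\log(2s-1)}{2(s-1)}$ (using the identity $(4\pi)^{s-1}C_s=\tfrac1{2s-1}$), which among integers $s\ge2$ is largest at $s=2$; so this route stays below $\frac{\log3}2<1$ and in particular does not reach the threshold $1$ that, via the equivalence of the preceding theorem, would settle the conjecture for $s$ near $1$. Bridging that gap is what remains open.
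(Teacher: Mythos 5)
Your proposal is correct and follows essentially the same route as the paper: a single application of Jensen's inequality for the logarithm with respect to the probability measure $d\nu=|F|^2\,dxdy$, followed by the already-established $s=2$ case of the conjecture to bound $\int_{\Pi^+}|F|^4y^2\,dxdy$. The only cosmetic difference is that you apply Jensen directly to $\log$ of the squared quantity $\varphi=4\pi y^2|F|^2$, whereas the paper applies Jensen to the convex function $t\mapsto\log\frac1{2\sqrt\pi t}$ at $t=|F|y$ and then invokes the Cauchy--Schwarz inequality to pass from $\int|F|y\,d\nu$ to $\bigl(\int(|F|y)^2\,d\nu\bigr)^{1/2}$; both chains land on the identical intermediate bound $\log\bigl[1/\bigl(2\sqrt\pi(\int|F|^4y^2\,dxdy)^{1/2}\bigr)\bigr]$.

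Your closing remarks are also accurate and worth keeping in mind: the pointwise estimate $|F|^2\le 1/(4\pi y^2)$ (Corollary~\ref{cor:pointwiseimproved}) indeed makes the integrand nonnegative, and the generalization using $\varphi^{s-1}$ together with the identity $(4\pi)^{s-1}C_s=\tfrac1{2s-1}$ shows that this Jensen route caps out at $\frac{\log(2s-1)}{2(s-1)}$, maximized over integers $s\ge 2$ at $s=2$, which explains why the bound $\tfrac{\log 3}{2}$ cannot be pushed to the threshold $1$ of Theorem~1.4 by this method alone.
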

The plan of the paper is as follows. In Section 2, we prove estimates for the standard weighted Bergman kernels from which we deduce a related estimate (Corollary 2.3). There, we prove Theorem 1.1 and Proposition 1.2. In Section 3, we prove Theorem 1.3, Theorem 1.4 and Theorem 1.5. In Section 4, we transfer the conjecture and our results to the unit disc and we show that the conjecture holds when restricted to analytic monomials. Finally, in Section 5, we overview the bounds we obtain in our attempts to prove the Lieb-Solovej conjecture. 

\section{The cases where $s$ is an integer and $s\rightarrow \infty$.}
\subsection{Estimates for the standard weighted Bergman kernels}
For $\nu>0,$ we recall the expression of the standard weighted Bergman kernel $B_\nu (z, w)$ of $\Pi^+$ (cf. e.g. \cite{BBGNPR}):
\begin{equation}\label{berg}
B_\nu (z, w)=\frac {2^{\nu-1}\nu}\pi \left (\frac {z-\bar w}i \right )^{-(\nu+1)} \quad \quad (z, w\in \Pi^+).
\end{equation} 
In particular, this kernel has the following reproducing property:
\begin{equation}\label{repro}
F(z)=\int_{\Pi^+} B_\nu (z, u+iv)F(u+iv)v^{\nu-1}dudv \quad \quad (F\in A^2 (\Pi^+)).
\end{equation}

We shall need the following result from \cite{BS}. We give its proof for completeness.
\begin{prop}\label{prop}
Let $r>0, \hskip 1truemm t> -1$ with $2r-t> 2.$ Then for $x+iy\in \Pi^+,$
\begin{equation}\label{est}
\int_{\Pi^+} \frac {v^tdudv}{|x+iy-u+iv|^{2r}}=\frac {4\pi \Gamma (1+t)\Gamma (2r-t-2)}{2^{2r}(\Gamma (r))^2 y^{2r-t-2}}.
\end{equation}
\end{prop}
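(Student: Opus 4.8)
The plan is to separate the double integral into a product of one-dimensional integrals, evaluate each as a Beta-type integral, and then contract the resulting ratio of Gamma functions with the Legendre duplication formula.

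First I would observe that $|x+iy-u+iv|^2=(x-u)^2+(y+v)^2$, so the translation $u\mapsto u+x$ removes the dependence on $x$ and turns the left-hand side of (\ref{est}) into
$$\int_0^\infty v^t\left(\int_{-\infty}^\infty\frac{du}{\bigl(u^2+(y+v)^2\bigr)^r}\right)dv .$$
The hypotheses $t>-1$ and $2r-t>2$ force $2r>2+t>1$, hence $r>\tfrac12$, which is precisely the condition under which the inner integral converges. Evaluating it via $u=(y+v)\tan\theta$, it equals $B\!\left(\tfrac12,r-\tfrac12\right)(y+v)^{1-2r}=\dfrac{\sqrt\pi\,\Gamma\!\left(r-\tfrac12\right)}{\Gamma(r)}\,(y+v)^{1-2r}$.

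Next I would compute the remaining $v$-integral by the scaling $v=yw$, which gives
$$\int_0^\infty\frac{v^t}{(y+v)^{2r-1}}\,dv=y^{t+2-2r}\int_0^\infty\frac{w^t}{(1+w)^{2r-1}}\,dw=y^{t+2-2r}\,\frac{\Gamma(t+1)\,\Gamma(2r-2-t)}{\Gamma(2r-1)},$$
the last step being the standard Beta integral, legitimate because $t+1>0$ and $2r-2-t>0$. Combining the two evaluations, the left-hand side of (\ref{est}) equals $\dfrac{\sqrt\pi\,\Gamma\!\left(r-\tfrac12\right)}{\Gamma(r)\,\Gamma(2r-1)}\,\Gamma(t+1)\,\Gamma(2r-2-t)\,y^{t+2-2r}$.

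Finally I would simplify the constant. Applying the Legendre duplication formula $\Gamma(2z)=\frac{2^{2z-1}}{\sqrt\pi}\Gamma(z)\Gamma\!\left(z+\tfrac12\right)$ with $z=r-\tfrac12$ yields $\Gamma(2r-1)=\frac{2^{2r-2}}{\sqrt\pi}\,\Gamma\!\left(r-\tfrac12\right)\Gamma(r)$, so that $\dfrac{\sqrt\pi\,\Gamma\!\left(r-\tfrac12\right)}{\Gamma(r)\,\Gamma(2r-1)}=\dfrac{\pi}{2^{2r-2}(\Gamma(r))^2}=\dfrac{4\pi}{2^{2r}(\Gamma(r))^2}$, which is exactly the prefactor in (\ref{est}). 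There is no genuine obstacle beyond bookkeeping; the only points that deserve care are checking that the three hypotheses ($t>-1$, $2r-t>2$, and the derived $r>\tfrac12$) are each invoked at the precise place where the corresponding one-dimensional integral converges, and invoking the duplication formula correctly so as to recover the stated constant $\frac{4\pi}{2^{2r}(\Gamma(r))^2}$ rather than an equivalent expression involving $\Gamma\!\left(r-\tfrac12\right)$ and $\Gamma(2r-1)$.
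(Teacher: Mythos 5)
Your proposal is correct and follows essentially the same route as the paper: integrate in $u$ first (reducing to $B\!\left(\tfrac12,r-\tfrac12\right)$, via $\tan\theta$ in your case versus a linear substitution plus $s=\sqrt{\sigma}$ in the paper), then integrate in $v$ by scaling to get $B(t+1,2r-2-t)$, and finish with the Legendre duplication formula at $z=r-\tfrac12$. The computations and the final constant all check out.
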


\begin{proof}
We first integrate with respect to $u.$ We have
$$I_{y, v}:=\int_{-\infty}^\infty \frac {du}{|x+iy-u+iv|^{2r}}=\int_{-\infty}^\infty \frac {du}{(x-u)^2+(y+v)^2)^r}.$$
We apply the change of variable $u\mapsto s=\frac {x-u}{y+v}, \hskip 1truemm du=-(y+v)ds;$ we get:
$$I_{y, v}=\frac {c_r}{(y+v)^{2r-1}},$$
where 
$$c_r:=\int_{-\infty}^\infty \frac 1{(s^2+1)^r}ds.$$ 
We next integrate with respect to $v;$ applying the change of variable $v\mapsto \tau=\frac vy, dv =yd\tau,$ we obtain:
$$\int_0^\infty \frac {v^tdv}{(y+v)^{2r-1}}=d_{r, t}\frac 1{y^{2r-t-2}},$$
where
$$d_{r, t}:=\int_0^\infty \frac {\tau^td\tau}{(1+\tau)^{2r-1}}.$$
We have shown that 
$$\int_{\Pi^+} \frac {v^tdudv}{|x+iy-u+iv|^{2r}}=c_rd_{r, t}\frac 1{y^{2r-t-2}}.$$
To conclude the proof, we need the following lemma.

\begin{lemma}
We have the following equalities.
$$c_r =\frac {\sqrt \pi \Gamma (r-\frac 12)}{\Gamma (r)}$$
and 
$$d_{r, t}=\frac {\Gamma (t+1)\Gamma (2r-t-2)}{\Gamma (2r-1)}.$$
\end{lemma}

\begin{proof}[Proof of the lemma]
We have:
$$c_r=2\int_0^\infty \frac 1{(s^2+1)^r}ds.$$
Applying the change of variable $s=\sqrt \sigma,\hskip 1truemm ds=\frac {d\sigma}{2\sqrt \sigma},$ we get:
$$c_r=\int_0^\infty \frac {\sigma^{-\frac 12}}{(\sigma+1)^r}d\sigma.$$
We next apply the following well known formula:
$$B(x, y)=\int_0^\infty \frac {t^{x-1}}{(t+1)^{x+y}}dt \quad \quad (x>0, \hskip 1truemm y>0),$$
where $B(\cdot, \cdot)$ denotes the beta function. We obtain:
$$c_r=B\left (\frac 12, r-\frac 12\right )=\frac {\Gamma (\frac 12)\Gamma (r-\frac 12)}{\Gamma (r)}=\frac {\sqrt \pi \Gamma (r-\frac 12)}{\Gamma (r)}$$
and
$$d_{r, t}=B(t+1, 2r-t-2)=\frac {\Gamma (t+1)\Gamma (2r-t-2)}{\Gamma (2r-1)}.$$
\end{proof}
It follows from this lemma that
$$c_rd_{r, t}=\frac {\sqrt \pi \Gamma (r-\frac 12)}{\Gamma (r)}\frac {\Gamma (t+1)\Gamma (2r-t-2)}{\Gamma (2r-1)}.$$
We finally apply the duplication formula (cf. e.g. \cite{A}):
$$\Gamma (x)\Gamma (x+\frac 12)=\frac {\sqrt \pi\Gamma (2x)}{2^{2x-1}}\quad \quad (x>0).$$
For $x=r-\frac 12,$ we get:
$$\frac {\Gamma (r-\frac 12)}{\Gamma (2r-1)}=\frac {\sqrt \pi}{2^{2r-2}\Gamma (r)}.$$
We then conclude that
$$c_rd_{r, t}=\frac {4\pi \Gamma (1+t)\Gamma (2r-t-2)}{2^{2r}(\Gamma (r))^2}.$$
The result follows. 
\end{proof}

From the reproducing property (\ref{repro}) of the Bergman kernel and the previous proposition, we deduce the following estimate.

\begin{cor}\label{cor:pointwiseimproved}
For every $F\in A^2 (\Pi^+),$ we have
$$\sup \limits_{x+iy\in \Pi^+} \left \vert  F(x+iy)\right \vert y\leq \frac 1{2\sqrt \pi}\left \Vert F \right \Vert_{ A^2 (\Pi^+)}.$$
This estimate is sharp as equality holds for $F_0 (z)=\frac 1 {(z+i)^{2}}.$
\end{cor}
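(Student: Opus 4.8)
The plan is to apply the reproducing formula \eqref{repro} with $\nu = 2$ and then estimate the resulting integral by Cauchy--Schwarz, feeding the outcome into Proposition \ref{prop}. First I would fix $z = x+iy \in \Pi^+$ and write, using \eqref{repro} and the explicit kernel \eqref{berg} with $\nu = 2$,
\begin{equation*}
F(z) = \int_{\Pi^+} B_2(z, u+iv)\,F(u+iv)\,v\,du\,dv = \frac{2}{\pi}\int_{\Pi^+} \left(\frac{z - \overline{u+iv}}{i}\right)^{-3} F(u+iv)\,v\,du\,dv .
\end{equation*}
Applying the Cauchy--Schwarz inequality with the splitting $v = v^{1/2}\cdot v^{1/2}$ gives
\begin{equation*}
|F(z)|^2 \le \frac{4}{\pi^2}\left(\int_{\Pi^+} \frac{v\,du\,dv}{|z - \overline{u+iv}|^{6}}\right)\left(\int_{\Pi^+} |F(u+iv)|^2\,du\,dv\right) = \frac{4}{\pi^2}\,\|F\|_{A^2(\Pi^+)}^2 \int_{\Pi^+} \frac{v\,du\,dv}{|z - \overline{u+iv}|^{6}} .
\end{equation*}
Now $|z - \overline{u+iv}| = |x+iy - u + iv|$, so the remaining integral is exactly the one evaluated in Proposition \ref{prop} with $r = 3$ and $t = 1$ (note $2r - t = 5 > 2$, so the hypothesis is met); it equals $\frac{4\pi\,\Gamma(2)\,\Gamma(2)}{2^{6}(\Gamma(3))^2\,y^{2}} = \frac{4\pi}{64\cdot 4\cdot y^2} = \frac{\pi}{64\,y^2}$. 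Substituting back yields $|F(z)|^2\,y^2 \le \frac{4}{\pi^2}\cdot\frac{\pi}{64}\,\|F\|_{A^2(\Pi^+)}^2 = \frac{1}{16\pi}\,\|F\|_{A^2(\Pi^+)}^2$, i.e. $|F(z)|\,y \le \frac{1}{4\sqrt{\pi}}\,\|F\|_{A^2(\Pi^+)}$ — which is off by a factor of $2$ from the claimed bound. The discrepancy signals that the pure Cauchy--Schwarz route is lossy; the sharp constant must come from a different weighting.

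The fix is to exploit a more efficient pairing. I would instead write the reproducing integral against $B_2(z,w)$ but split the weight and the kernel power asymmetrically, or — cleaner — use the reproducing property with a general parameter $\nu$ and optimize. Concretely, for any $\nu > 1$ one has $F(z) = \int_{\Pi^+} B_\nu(z,w) F(w)\, v^{\nu-1}\,du\,dv$; applying Cauchy--Schwarz with the split $v^{\nu-1} = v^{(\nu-1)/2 + a}\cdot v^{(\nu-1)/2 - a}$ for a free parameter $a$ introduces a tunable exponent, and the two resulting integrals are again of the form handled by Proposition \ref{prop} (one a pure kernel estimate, the other requiring $F \in A^2$ only when $a = (\nu-1)/2$, i.e. $\nu = 2$, $a = 1/2$). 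The correct optimization that recovers the constant $\frac{1}{2\sqrt\pi}$ is to test directly against the $A^2$-kernel: write $F(z) = \langle F, B_0(z,\cdot)\rangle_{A^2}$, where $B_0(z,w) = \frac{1}{\pi}\left(\frac{z-\bar w}{i}\right)^{-2}$ is the unweighted Bergman kernel of $\Pi^+$; then $|F(z)| \le \|F\|_{A^2}\,\|B_0(z,\cdot)\|_{A^2} = \|F\|_{A^2}\, B_0(z,z)^{1/2}$, and $B_0(z,z) = \frac{1}{\pi}(2y)^{-2} = \frac{1}{4\pi y^2}$, giving exactly $|F(z)|\,y \le \frac{1}{2\sqrt{\pi}}\|F\|_{A^2}$. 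The main obstacle is therefore not the estimate itself but recognizing that one should use the \emph{unweighted} ($\nu = 0$) kernel evaluated on the diagonal, rather than the weighted reproducing formula \eqref{repro} with $\nu > 0$; once that is identified the proof is immediate.

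For sharpness, I would take $F_0(z) = (z+i)^{-2}$, which up to the constant $\pi$ is $B_0(z, i)$, the unweighted Bergman kernel with one argument fixed at $z_0 = i$. By the reproducing property $\langle B_0(\cdot, i), B_0(\cdot, i)\rangle_{A^2} = B_0(i,i)$, so $\|F_0\|_{A^2}^2 = \pi^2 B_0(i,i) = \pi^2\cdot\frac{1}{4\pi} = \frac{\pi}{4}$, hence $\|F_0\|_{A^2} = \frac{\sqrt\pi}{2}$. On the other hand $\sup_{z}|F_0(z)|\,y$ is attained (by the reproducing identity, Cauchy--Schwarz is an equality precisely for such kernel functions) at $z = i$, where $|F_0(i)|\cdot 1 = |2i|^{-2} = \frac14$. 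Then $\frac{|F_0(i)|\cdot 1}{\|F_0\|_{A^2}} = \frac{1/4}{\sqrt\pi/2} = \frac{1}{2\sqrt\pi}$, confirming equality. I would only need to double-check that the supremum of $|F_0(z)|y$ over all of $\Pi^+$ is not larger than its value at $z=i$; this is a one-variable calculus check on $|F_0(x+iy)|y = \frac{y}{(x^2+(y+1)^2)}$, whose maximum over $x$ is at $x=0$ and then over $y$ at $y=1$, giving value $\frac14$ as required.
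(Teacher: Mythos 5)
Your final argument is correct and is essentially the paper's proof: the paper applies the reproducing formula with the unweighted kernel (its $B_1$, since the weight there is $v^{\nu-1}$, which is your $B_0$) together with Cauchy--Schwarz and the evaluation $\int_{\Pi^+}|x+iy-u+iv|^{-4}\,du\,dv=\pi/(4y^2)$ --- exactly your identity $|F(z)|\le \|F\|_{A^2}\,K(z,z)^{1/2}$ with $K(z,z)=1/(4\pi y^2)$ --- and the sharpness check at $F_0(z)=(z+i)^{-2}$, $z=i$ matches as well. Your discarded first attempt with $\nu=2$ is miscomputed (the kernel constant is $4/\pi$, not $2/\pi$; with the split $v=v^{1/2}\cdot v^{1/2}$ the second Cauchy--Schwarz factor is $\|F\|_{A^2_1}$, not $\|F\|_{A^2}$; and the claimed outcome $\tfrac{1}{4\sqrt{\pi}}$ would contradict the sharpness you verify afterwards --- a lossy route can only give a \emph{larger} constant), but since you abandon that route it does not affect the validity of the proof.
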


\begin{proof}
From (\ref{berg}) and (\ref{repro}) in the particular case $\nu=1$, it follows that
$$|F(x+iy)|\leq\frac 1\pi\int_{\Pi^+} \frac {1}{|x+iy-u+iv|^{2}}|F(u+iv)|dudv.$$
Applying the Schwarz inequality implies
$$|F(x+iy)|\leq\frac 1\pi\left (\int_{\Pi^+} \frac 1{|x+iy-u+iv|^{4}}dudv\right )^{\frac 12}\left \Vert F \right \Vert_{ A^2 (\Pi^+)}.$$
Applying the previous proposition with $t=0$ and $r=2,$ we obtain:
\begin{equation}\label{estberg}
\int_{\Pi^+} \frac 1{|x+iy-u+iv|^4}dudv=\frac \pi {4y^2}.
\end{equation}
So
$$\sup \limits_{x+iy\in \Pi^+} \left \vert F(x+iy)\right \vert y\leq \frac 1{2\sqrt \pi}\left \Vert F \right \Vert_{ A^2 (\Pi^+)}.$$
For $F_0 (x+iy)=\frac 1 {(z+i)^{2}},$ we have
$$\sup \limits_{x+iy\in \Pi^+} \left \vert  F_0(x+iy)\right \vert y\geq \left \vert F(i)\right \vert\times 1=\frac 14=\frac 1{2\sqrt \pi}\left \Vert F_0 \right \Vert_{ A^2 (\Pi^+)},$$
where the latter equality follows from (\ref{estberg}). This proves that the estimate is sharp.
\end{proof}
\vskip 1truemm

\subsection{Proof of Theorem 1.1}
\begin{rem}
{\rm The conjecture of Lieb-Solovej \cite{LS} can be written in the form of the following extremum problem. For all $s> 1,$
\begin{equation}\label{bsconject}
\max \limits_{F\in A^2 (\Pi^+), \hskip 1truemm F\not \equiv 0} \frac {\int_{\Pi^+} |F(x+iy)|^{2s}y^{2s-2}dxdy}{\left (\int_{\Pi^+} |F(x+iy)|^2dxdy \right )^s}=\frac {\pi^{1-s}}{(2s-1)2^{2s-2}}.
\end{equation}
May this conjecture be true, Theorem 1.3 says that  the Bergman kernel functions $\frac 1\pi (z-\bar z_0)^{2}\hskip 2truemm (z_0\in \Pi^+)$ will be  maximizing functions of this problem.} 
\end{rem}
\noindent
\textbf{Suggestion.} Prove that the maximum is attained (easy). Show next that the maximum functions are 'unique' up to a multiplicative constant and are equal to Bergman kernel functions.

Lieb and Solovej have proved the conjecture when $s$ is an integer greater than $1.$ To this aim, they used representation theory. We provide a direct proof below based on Fourier-Laplace representation of functions in $A^2 (\Pi^+)$.

\begin{proof}[Proof of Theorem 1.1]
We start this proof by recalling the following Paley-Wiener theorem for Bergman spaces (see \cite[Theorem 1]{D2}).
\begin{prop} \label{PW} 
Let $F$ be a holomorphic function on $\Pi^+$, and let $\alpha>-1$. Then the following assertions are equivalent.
\begin{enumerate}
\item[\rm (1)]
$F$ belongs to the weighted Bergman space $A^2_\alpha (\Pi^+).$ 
\item [\rm (2)]
There exists a function $f:(0, \infty)\rightarrow \mathbb C$ satisfying the estimate $\int_0^\infty \frac{|f(t)|^2}{t^{\alpha+1}}dt <\infty,$ such that
$$F(z) = \int_0^\infty f(t)e^{itz}dt \quad \quad \quad (z\in \Pi^+).$$
\end{enumerate}
In this case, 
$$||F||_{A^2_\alpha (\Pi^+)}^2 =  \frac{2\pi \Gamma (\alpha +1)}{2^{\alpha+1}}\int_0^\infty |f(t)|^2 \frac {dt}{t^{\alpha+1}},$$ 
where $\Gamma$ is the usual gamma function.
\end{prop}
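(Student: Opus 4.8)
The plan is to prove the two implications separately, reading off the norm identity from each. I would start with the easy direction $(2)\Rightarrow(1)$. Given $f$ with $\int_0^\infty|f(t)|^2t^{-(\alpha+1)}\,dt<\infty$, the Cauchy--Schwarz inequality applied to the factorization $|f(t)|e^{-ty}=\big(|f(t)|t^{-(\alpha+1)/2}\big)\big(t^{(\alpha+1)/2}e^{-ty}\big)$ shows that $F(z)=\int_0^\infty f(t)e^{itz}\,dt$ converges absolutely and locally uniformly on $\Pi^+$; differentiation under the integral sign then makes $F$ holomorphic there. Fixing $y>0$ and writing $z=x+iy$, one computes the Fourier transform in $x$ of the bounded $L^1(\R)\cap L^2(\R)$ slice $x\mapsto F(x+iy)$ — it is, up to the usual $2\pi$ normalization, $\tau\mapsto\mathbf 1_{(0,\infty)}(\tau)f(\tau)e^{-\tau y}$ — so Plancherel's theorem yields the vertical-slice identity $\int_\R|F(x+iy)|^2\,dx=2\pi\int_0^\infty|f(t)|^2e^{-2ty}\,dt$. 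Multiplying by $y^\alpha$, integrating in $y$, interchanging the order of integration by Tonelli, and using $\int_0^\infty y^\alpha e^{-2ty}\,dy=\Gamma(\alpha+1)(2t)^{-(\alpha+1)}$ gives both $F\in A^2_\alpha(\Pi^+)$ and the claimed value of $\|F\|_{A^2_\alpha(\Pi^+)}^2$.

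For the converse $(1)\Rightarrow(2)$, let $F\in A^2_\alpha(\Pi^+)$. The first step is to promote the a.e.\ finiteness of $y\mapsto\int_\R|F(x+iy)|^2\,dx$ (immediate from Fubini in the definition of the norm) to finiteness for \emph{every} $y>0$, with bounds uniform on compact subsets of $(0,\infty)$: this follows from the sub-mean-value inequality for the subharmonic function $|F|^2$ over a disc of radius $\delta<y$ centered at $x+iy$, integrated in $x$ and combined with translation invariance in the real direction and the fact that $v\mapsto v^\alpha$ is bounded above and below on $[y-\delta,y+\delta]$. Then I would set $\phi_y:=\widehat{F(\cdot+iy)}\in L^2(\R)$ and show that, on the Fourier side, the Cauchy--Riemann relation $\partial_yF=i\,\partial_xF$ becomes the ordinary differential equation $\partial_y\phi_y(\tau)=-\tau\,\phi_y(\tau)$ — equivalently, a contour shift of $z\mapsto F(z)e^{-i\tau z}$ across a horizontal strip — so that $\phi_y(\tau)=2\pi f(\tau)e^{-\tau y}$ for a function $f$ independent of $y$.

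It remains to see that $f$ vanishes on $(-\infty,0]$ and lies in the weighted space. If $\eta:=\int_{-\infty}^0|f(\tau)|^2\,d\tau>0$, then since $e^{-2y\tau}\ge 1$ for $\tau\le 0$ and $y>0$, Plancherel gives $\int_\R|F(x+iy)|^2\,dx\ge 2\pi\eta$ for \emph{all} $y>0$, forcing $\|F\|_{A^2_\alpha(\Pi^+)}^2\ge 2\pi\eta\int_0^\infty y^\alpha\,dy=\infty$ (here $\alpha>-1$ is exactly what makes $\int_0^\infty y^\alpha\,dy$ diverge), a contradiction; hence $f=0$ a.e.\ on $(-\infty,0]$. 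Fourier inversion then gives $F(x+iy)=\int_0^\infty f(t)e^{-ty}e^{itx}\,dt=\int_0^\infty f(t)e^{itz}\,dt$, and the same slice-and-Tonelli computation as in the first direction produces $\|F\|_{A^2_\alpha(\Pi^+)}^2=\frac{2\pi\Gamma(\alpha+1)}{2^{\alpha+1}}\int_0^\infty|f(t)|^2t^{-(\alpha+1)}\,dt$; this is finite because $F\in A^2_\alpha(\Pi^+)$, which is precisely the required integrability condition on $f$.

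I expect the main obstacle to be the converse direction, and within it two analytic points: first, passing from ``almost every vertical slice lies in $L^2(\R)$'' to ``every slice lies in $L^2(\R)$, locally uniformly in $y$''; and second, making rigorous the Fourier-side identity $\partial_y\phi_y=-\tau\,\phi_y$ — that is, justifying the differentiation of the $L^2(\R)$-valued map $y\mapsto\phi_y$, or equivalently controlling the vertical sides in the contour-shift argument as the real extent tends to infinity. Once these are settled, the support argument for $f$ and the norm computation are routine.
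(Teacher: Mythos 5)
The paper does not actually prove this proposition: it is quoted verbatim from Duren, Gallardo-Guti\'errez and Montes-Rodr\'iguez \cite[Theorem 1]{D2}, so there is no internal proof to compare against. Your argument is a correct, self-contained proof along the classical Paley--Wiener lines: the direction $(2)\Rightarrow(1)$ together with the norm identity is exactly the Plancherel-plus-Tonelli computation using $\int_0^\infty y^\alpha e^{-2ty}\,dy=\Gamma(\alpha+1)(2t)^{-(\alpha+1)}$, and your converse correctly isolates the two genuine technical points (every slice in $L^2(\R)$ locally uniformly via the sub-mean-value property of $|F|^2$, and the $y$-independence of $e^{\tau y}\widehat{F(\cdot+iy)}(\tau)$ via a contour shift whose vertical sides are controlled by the same sub-mean-value bound, which forces $F(x+iy)\to0$ as $|x|\to\infty$ locally uniformly in $y$). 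The support argument and the final integrability of $t^{-(\alpha+1)}|f(t)|^2$ read off from the finiteness of $\|F\|_{A^2_\alpha}$ are fine; for the pointwise Fourier inversion at the end, note that $f(t)e^{-ty}=(f(t)e^{-ty/2})e^{-ty/2}$ is in $L^1(0,\infty)$ by Cauchy--Schwarz, which you implicitly use. One small inaccuracy: your parenthetical claim that ``$\alpha>-1$ is exactly what makes $\int_0^\infty y^\alpha\,dy$ diverge'' is off --- that integral diverges for every real $\alpha$ (at infinity when $\alpha\ge-1$, at the origin when $\alpha\le-1$) --- but this does not affect the contradiction you draw from it.
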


The following lemma is the key of our proof.
\begin{lemma}\label{keylemma}
For any $u>0$, if $I_n$ is the integral defined by
$$I_n(u):=\int_{A_n}\left(u-\sum_{j=1}^nt_j\right)\left(\prod_{j=1}^nt_j\right)dt_1\ldots t_n$$
where $A_n:=\{(t_1,\ldots,t_n)\in (0,\infty)^n:\,u-\sum_{j=1}^nt_j>0\}$, then $I_n$ converges and $$I_n(u)=\frac{u^{2n+1}}{\Gamma(2n+2)}.$$
\end{lemma}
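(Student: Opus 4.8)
The plan is to prove this by induction on $n$, peeling off one variable at a time. The base case $n=1$ is a direct computation: $I_1(u) = \int_0^u (u-t_1)t_1\,dt_1 = \frac{u^3}{6} = \frac{u^3}{\Gamma(4)}$, which matches the claimed formula. For the inductive step, I would fix $t_n = \tau \in (0,u)$ and integrate the remaining variables over the simplex-type region $\{(t_1,\dots,t_{n-1}): \sum_{j=1}^{n-1} t_j < u - \tau\}$. Pulling the factor $\tau$ out, the inner integral is exactly $\tau \cdot I_{n-1}(u-\tau)$, so that
\[
I_n(u) = \int_0^u \tau \, I_{n-1}(u-\tau)\, d\tau = \frac{1}{\Gamma(2n)}\int_0^u \tau\,(u-\tau)^{2n-1}\,d\tau,
\]
using the induction hypothesis $I_{n-1}(v) = v^{2n-1}/\Gamma(2n)$.

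The remaining step is to evaluate $\int_0^u \tau (u-\tau)^{2n-1}\,d\tau$, which after the substitution $\tau = u\sigma$ becomes $u^{2n+1} B(2, 2n) = u^{2n+1}\frac{\Gamma(2)\Gamma(2n)}{\Gamma(2n+2)} = u^{2n+1}\frac{\Gamma(2n)}{\Gamma(2n+2)}$. Substituting back gives $I_n(u) = \frac{1}{\Gamma(2n)}\cdot u^{2n+1}\frac{\Gamma(2n)}{\Gamma(2n+2)} = \frac{u^{2n+1}}{\Gamma(2n+2)}$, completing the induction. Convergence of $I_n(u)$ is automatic at each stage since the integrand is continuous and the domain of integration $A_n$ is bounded (it is contained in the cube $(0,u)^n$).

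I do not anticipate a genuine obstacle here; the only point requiring a little care is correctly identifying the inner integral as $\tau\, I_{n-1}(u-\tau)$, i.e. checking that the region $A_n$ with $t_n$ fixed at $\tau$ is precisely $A_{n-1}$ with parameter $u-\tau$, and that the integrand factors as $\left(u - \tau - \sum_{j=1}^{n-1} t_j\right)\left(\prod_{j=1}^{n-1} t_j\right)\cdot \tau$. Once that is in place the computation is routine Beta-function bookkeeping. An alternative one-line proof would recognize $I_n(u)$ as an iterated Dirichlet integral and quote the Dirichlet integral formula directly, but the inductive argument is more self-contained and is the version I would write out.
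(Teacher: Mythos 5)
Your proof is correct and matches the paper's approach: both peel off one variable at a time, reducing the inner integral to a Beta-function evaluation, with the recursion $I_n(u) = B(2,2n)\,I_{n-1}(u)/u^{-2}$ in effect. The only difference is presentational: you phrase it as a clean induction on $n$ using the lemma's own formula as the hypothesis, whereas the paper unrolls the same telescoping into a chain of auxiliary integrals $I_{n-k}$ whose exponent $2k+1$ is tracked explicitly, then applies $I_{n-k+1}=B(2k,2)I_{n-k}$ repeatedly starting from $I_1$.
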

\begin{proof}
For $k=0,\ldots,n-1,$ define $I_{n-k}$ by 
$$I_{n-k}(u):=\int_{A_{n,k}}\left(u-\sum_{j=1}^{n-k}t_j\right)^{2k+1}\left(\prod_{j=1}^{n-k}t_j\right)dt_1\ldots t_{n-k}$$
where $A_{n,k}:=\{(t_1,\ldots,t_{n-k})\in (0,\infty)^n:\,u-\sum_{j=1}^{n-k}t_j>0\}$.
\vskip .1cm
We observe that
$$I_1(u)=\int_{(0,\infty)\cap(0,u)}\left(u-t\right)^{2n-1}tdt=u^{2n+1}B(2n,2)$$
where $B(\cdot,\cdot)$ is the usual beta function.
\vskip .1cm
The lemma follows from the estimate of $I_1$ and the following relation.
\begin{equation}\label{kinduction}
I_{n-k+1}(u)=B(2k,2)I_{n-k}(u),\,\, k=0,\ldots,n-1.
\end{equation}
To prove (\ref{kinduction}), we recall that 
$$I_{n-k+1}(u):=\int_{A_{n,k+1}}\left(u-\sum_{j=1}^{n-k+1}t_j\right)^{2k-1}\left(\prod_{j=1}^{n-k+1}t_j\right)dt_1\ldots t_{n-k+1}.$$
Put $t_{n-k+1}=\left(u-\sum_{j=1}^{n-k}t_j\right)t$. Then we obtain
\begin{eqnarray*} I_{n-k+1}(u) &=& \int_{A_{n,k}}\left(u-\sum_{j=1}^{n-k}t_j\right)^{2k+1}\left(\prod_{j=1}^{n-k}t_j\right)\left(\int_0^1\left(1-t\right)^{2k-1}tdt\right)dt_1\ldots t_{n-k}\\ &=& B(2k,2)I_{n-k}(u).
\end{eqnarray*}
\end{proof}
Let us now prove Theorem $1.1,$ i.e. the conjecture in the case of integer exponents.
Let $s=n>1$ be an integer. Let $F\in A^2(\Pi^+)$. We recall with Proposition \ref{PW} that $$F(z)=\int_0^\infty e^{izt}f(t)dt,\,\,z\in \Pi^+$$
with $$\int_{\Pi^+}|F(z)|^2dV(z)=\pi\int_0^\infty\frac{|f(t)|^2}{t}dt.$$
We observe that $\|F\|_{A_{2n-2}^{2n}}=\|F^n\|_{A_{2n-2}^{2}}$. We can write
\begin{eqnarray*}
F^n(z) &=& \int_{(0,\infty)^n}e^{iz(t_1+t_2+\ldots+t_n)}f(t_1)\ldots f(t_n)dt_1\ldots dt_n\\ &=& \int_0^\infty e^{izu}g(u)du
\end{eqnarray*}
where $$g(u)=\int_{A_{n-1}}f(u-\sum_{j=2}^nt_j)f(t_2)\ldots f(t_n)dt_2\ldots dt_n,$$
$A_{n-1}=\{(t_2,\ldots,t_n)\in (0,\infty)^{n-1}:u-\sum_{j=2}^nt_j>0\}$.
\vskip .1cm
By Proposition \ref{PW}, we only need to estimate
$$\frac{2\pi\Gamma(2n-1)}{2^{2n-1}}\int_0^\infty\frac{|g(u)|^2}{u^{2n-1}}du.$$
Using H\"older's inequality, we easily obtain
$$|g(u)|^2\le M_n(u)\times L_n(u)$$
where $$M_n(u):=\int_{A_{n-1}}\frac{|f(u-\sum_{j=2}^nt_j)|^2}{u-\sum_{j=2}^nt_j}\times\frac{|f(t_2)|^2}{t_2}\times\ldots\times\frac{|f(t_n)|^2}{t_n}dt_2\ldots dt_n$$
and using Lemma \ref{keylemma},
$$L_n(u):=\int_{A_{n-1}}\left(u-\sum_{j=2}^nt_j\right)t_2\times\ldots\times t_ndt_2\ldots dt_n=\frac{u^{2n-1}}{\Gamma(2n)}.$$
Therefore,
\begin{eqnarray*}
\int_{\Pi^+}|F(x+iy)|^{2n}y^{2n-2}dxdy &=& \frac{2\pi\Gamma(2n-1)}{2^{2n-1}}\int_0^\infty\frac{|g(u)|^2}{u^{2n-1}}du\\ &\le& \frac{2\pi\Gamma(2n-1)}{2^{2n-1}}\times\frac{\pi^{-n}}{\Gamma(2n)}\left(\pi\int_0^\infty\frac{|f(t)|^2}{t}dt\right)^n\\ &=& \frac{\pi^{1-n}}{2^{2n-2}(2n-1)}\left(\int_{\Pi^+}|F(x+iy)|^2dxdy\right)^n.
\end{eqnarray*}
The proof of the estimate (\ref{bsconject}) in the case where $s\ge 1$ is an integer is complete.
\end{proof}
\vskip 1truemm
We next prove Proposition 1.2, which says that the conjecture holds for $s\rightarrow\infty$. We state it in the following more precise form.
\begin{prop}
For $s\ge 1$, define $$\Phi(s):=\frac {\max \limits_{F\in A^2 (\Pi^+), F\not \equiv 0} \frac {\int_{\Pi^+} |F(x+iy)|^{2s}y^{2s-2}dxdy}{\left (\int_{\Pi^+} |F(x+iy)|^2dxdy \right )^s}}{\frac {\pi^{1-s}}{(2s-1)2^{2s-2}}}.$$
Then the following hold.
\begin{itemize}
\item[(a)] For any $n\le s\le n+1$ where $n=1, 2,\cdots$, it holds that 
\begin{equation}\label{eq:onetwoineq}\frac{2s-1}{2n+1}\le \Phi(s)\le \frac{2s-1}{2n-1}.
\end{equation}
In particular, we have the following.
\item[(b)] The conjecture of Lieb-Solovej is asymptotically true, in the sense that
\begin{equation}
\lim \limits_{s\rightarrow \infty}\label{as}
\frac {\max \limits_{F\in A^2 (\Pi^+), F\not \equiv 0} \frac {\int_{\Pi^+} |F(x+iy)|^{2s}y^{2s-2}dxdy}{\left (\int_{\Pi^+} |F(x+iy)|^2dxdy \right )^s}}{\frac {\pi^{1-s}}{(2s-1)2^{2s-2}}}=1.
\end{equation}
\end{itemize}
\end{prop}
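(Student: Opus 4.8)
The plan is to sandwich $\Phi(s)$, for $s$ in an interval $[n,n+1]$ with $n\ge 1$ an integer, between the two explicit quantities $\frac{2s-1}{2n+1}$ and $\frac{2s-1}{2n-1}$, which both tend to $1$ as $s$ (hence $n$) grows; part~(b) then follows from part~(a) by the squeeze theorem. Throughout I normalize $\|F\|_{A^2(\Pi^+)}=1$ and write $N(s)$ for the supremum in the numerator of $\Phi(s)$, so that $\Phi(s)=N(s)/C_s$; I shall not need that this supremum is attained. The two inputs are the sharp pointwise bound of Corollary~\ref{cor:pointwiseimproved}, which after normalization reads $|F(x+iy)|^2y^2\le \frac1{4\pi}$ on $\Pi^+$, and the already-established integer case, Theorem~\ref{thm:integercase}.

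For the upper bound in (\ref{eq:onetwoineq}), fix $n\le s\le n+1$ and $\|F\|_{A^2}=1$, and factor pointwise
\[
|F(x+iy)|^{2s}y^{2s-2}=\Bigl(|F(x+iy)|^{2n}y^{2n-2}\Bigr)\Bigl(|F(x+iy)|^{2}y^{2}\Bigr)^{s-n}.
\]
Since $s-n\ge 0$, Corollary~\ref{cor:pointwiseimproved} bounds the last factor by $(4\pi)^{-(s-n)}$; integrating and applying Theorem~\ref{thm:integercase} at the integer $n$ gives $N(s)\le (4\pi)^{-(s-n)}C_n$, and the elementary identity $(4\pi)^{-(s-n)}C_n=\pi^{1-s}/((2n-1)2^{2s-2})$ then yields, upon dividing by $C_s$, the bound $\Phi(s)\le \frac{2s-1}{2n-1}$.

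For the lower bound I would test against the (unnormalized) Bergman kernel $F_0(z)=(z+i)^{-2}$. Two applications of Proposition~\ref{prop} with base point $i$ — with $(t,r)=(0,2)$ and with $(t,r)=(2s-2,2s)$ — give $\int_{\Pi^+}|F_0|^2\,dxdy=\frac\pi4$ and $\int_{\Pi^+}|F_0(x+iy)|^{2s}y^{2s-2}\,dxdy=\frac{\pi}{(2s-1)2^{4s-2}}$, and one checks directly that these obey $\int_{\Pi^+}|F_0|^{2s}y^{2s-2}=C_s\bigl(\int_{\Pi^+}|F_0|^2\bigr)^{s}$ exactly. Hence $N(s)\ge C_s$, i.e. $\Phi(s)\ge 1$, which in particular is $\ge \frac{2s-1}{2n+1}$ on $[n,n+1]$ since there $2s-1\le 2n+1$. (The stated bound $\frac{2s-1}{2n+1}$ is also reachable by the mirror of the previous paragraph: for $s\le n+1$ one has $s-(n+1)\le 0$, so Corollary~\ref{cor:pointwiseimproved} gives $\bigl(|F|^2y^2\bigr)^{s-(n+1)}\ge (4\pi)^{\,n+1-s}$; testing against a near-maximizer of the $(n+1)$-problem and using $N(n+1)\ge C_{n+1}$ then gives $N(s)\ge (4\pi)^{\,n+1-s}C_{n+1}=\pi^{1-s}/((2n+1)2^{2s-2})$.)

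This proves~(a). For~(b), given $s$ large take $n=\lfloor s\rfloor$, so $n\le s\le n+1$, $n\to\infty$, and
\[
\frac{2n-1}{2n+1}\le\frac{2s-1}{2n+1}\le\Phi(s)\le\frac{2s-1}{2n-1}\le\frac{2n+1}{2n-1};
\]
the outer terms both tend to $1$, so $\Phi(s)\to 1$. I do not foresee a genuine obstacle: the care points are keeping track of the sign of the interpolation exponent $s-n$ (resp. $s-(n+1)$) when invoking Corollary~\ref{cor:pointwiseimproved}, and the bookkeeping that rewrites $(4\pi)^{\mp(s-n)}C_n$ as $\pi^{1-s}/((2n\pm1)2^{2s-2})$. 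The conceptual point is merely that Corollary~\ref{cor:pointwiseimproved}, being itself sharp, transports the integer identity of Theorem~\ref{thm:integercase} across $[n,n+1]$ at the cost of only the fully explicit factor $(2s-1)/(2n\mp1)$.
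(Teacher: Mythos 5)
Your argument is correct and, for the upper bound $\Phi(s)\le\frac{2s-1}{2n-1}$ and the parenthetical version of the lower bound, it is exactly the paper's proof: interpolate pointwise via Corollary \ref{cor:pointwiseimproved} between the exponent $s$ and the neighbouring integers, and invoke Theorem \ref{thm:integercase}. Your primary route to the lower bound --- testing on $F_0(z)=(z+i)^{-2}$ via Proposition \ref{prop}, which gives the cleaner and slightly stronger statement $\Phi(s)\ge 1$ for all $s\ge 1$ (this is the $r=2$ equality case of Theorem 1.3) --- is a minor but legitimate variation on the paper, which instead runs the pointwise interpolation in reverse from $n+1$ down to $s$.
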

\begin{proof}
We note that (\ref{as}) follows from (\ref{eq:onetwoineq}). Hence we only prove the latter. In fact, since by Theorem 2.5, $\Phi (n)=1$ for any $n=1, 2, \cdots,$ it suffices to show that 
$$\frac{2s-1}{2n+1}\Phi(n+1)\le \Phi(s)\le \frac{2s-1}{2n-1}\Phi (n)$$
for any $n\le s\le n+1$ where $n=1, 2,\cdots.$ 
\vskip .1cm
Using the pointwise estimate in Corollary \ref{cor:pointwiseimproved}, we first obtain
$$\int_{\Pi^+} |F(x+iy)|^{2(n+1)}y^{2(n+1)-2}dxdy$$
$$= \int_{\Pi^+} |F(x+iy)|^{2s}\left(|F(x+iy)|y\right)^{2n+2-2s}y^{2s-2}dxdy$$
$$\le \frac{\pi^{s-n-1}}{2^{2n+2-2s}}\left(\int_{\Pi^+} |F(x+iy)|^{2}dxdy\right)^{n+1-s}\times \int_{\Pi^+} |F(x+iy)|^{2s}y^{2s-2}dxdy.$$
It follows that
\begin{eqnarray*}
\Phi(n+1) &\le& \frac{\pi^{s-n-1}}{2^{2n+2-2s}}\times \frac{(2n+1)2^{2n}}{\pi^{-n}}\Phi(s)\\
&=& \frac{2n+1}{2s-1}\Phi(s).
\end{eqnarray*}
Similarly, one obtains that
$$\Phi(s)\le \frac{2s-1}{2n-1}\Phi(n).$$
Hence, $$\frac{2s-1}{2n+1}\Phi(n+1)\le\Phi(s)\le \frac{2s-1}{2n-1}\Phi(n)$$
from which (\ref{eq:onetwoineq}) follows.
\end{proof}
\noindent

\section{The first test of the Lieb-Solovej conjecture. Consequences of the conjecture}
\subsection{The test on the powers of the Bergman kernel function. Proof of Theorem 1.3}
Following Proposition 2.1 and Corollary 2.5, we must test out the Lieb-Solovej conjecture on the powers of the Bergman kernel function.
\vskip 1truemm
\noindent
{\textbf {Question.}} Let $r, s >1.$ Prove or disprove the following estimate
\begin{equation}\label{questionpower}
\int_{\Pi^+} \frac {y^{2s-2}}{\left \vert x+i(y+1)\right \vert^{2rs}}dxdy\leq C_s \left (\int_{\Pi^+} \frac {dxdy}{\left \vert x+i(y+1)\right \vert^{2r}}   \right )^s.
\end{equation}
Are there values of $r>1$ for which equality holds in (\ref{questionpower})?

\vskip .1truecm
By Proposition 2.1, the estimate (\ref{questionpower}) is equivalent to the following inequality for the Gamma function
\begin{equation}\label{pow}
\frac {\Gamma (2s)\Gamma (2s(r-1))}{\left (\Gamma (rs) \right )^2}\leq \left (\frac {\Gamma (2(r-1))}{(\Gamma (r))^2} \right )^s,
\end{equation}
which is an equality when $r=2.$

It follows from Theorem 2.7 that this inequality is true when $s$ is a positive integer. We record the following corollary.
\begin{cor}
For all integers $n=2, 3, \cdots$ and real numbers $r>1,$ the following estimate holds.
$$\frac {\Gamma (2n)\Gamma (2n(r-1))}{\left (\Gamma (nr) \right )^2}\leq \left (\frac {\Gamma (2(r-1))}{(\Gamma (r))^2} \right )^n.$$
\end{cor}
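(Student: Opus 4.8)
The plan is to derive this corollary directly from Theorem 2.7 (equivalently, from the integral estimate (\ref{power}) that it asserts for integer $s$), so that essentially no new work is required. First I would observe that the Gamma-function inequality
\begin{equation*}
\frac {\Gamma (2n)\Gamma (2n(r-1))}{\left (\Gamma (nr) \right )^2}\leq \left (\frac {\Gamma (2(r-1))}{(\Gamma (r))^2} \right )^n
\end{equation*}
is precisely inequality (\ref{pow}) specialized to $s = n$. So the real content is to confirm that (\ref{power}) is equivalent to (\ref{pow}) via Proposition \ref{prop}, and that Theorem 2.7 is applicable with $s$ replaced by a positive integer $n \ge 2$.

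Second, I would carry out the reduction of (\ref{power}) to (\ref{pow}) explicitly. Applying Proposition \ref{prop} to the left-hand side of (\ref{power}): here the point is $x + i(y+1)$, which we rewrite so that the base point has imaginary part $1$, and we take $t = 2s-2$ and exponent $2r \to 2rs$ (so that ``$r$'' in the proposition is $rs$). The hypotheses $rs > 0$, $2s-2 > -1$, and $2(rs) - (2s-2) = 2s(r-1) + 2 > 2$ hold since $r > 1$ and $s \ge 1$, so the integral equals
\begin{equation*}
\frac {4\pi \Gamma (2s-1)\Gamma (2s(r-1))}{2^{2rs}(\Gamma (rs))^2},
\end{equation*}
after reading off $2r - t - 2 = 2rs - (2s-2) - 2 = 2s(r-1)$. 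Similarly the right-hand integral, with $t = 0$ and ``$r$'' $= r$, equals $\frac{4\pi\Gamma(1)\Gamma(2r-2)}{2^{2r}(\Gamma(r))^2} = \frac{4\pi\Gamma(2(r-1))}{2^{2r}(\Gamma(r))^2}$. Substituting both into (\ref{power}) and cancelling the common factors $4\pi$ and the powers of $2$ (note $2^{2rs}$ on the left matches $(2^{2r})^s$ raised to the $s$-th power on the right, and the remaining constant $C_s = \pi^{1-s}/((2s-1)2^{2s-2})$ accounts for the $\Gamma(2s-1)$ versus $(2s-1)$ discrepancy once one uses $\Gamma(2s) = (2s-1)\Gamma(2s-1)$) yields exactly (\ref{pow}) with $s$ general; this matches the claim in the excerpt that (\ref{questionpower}) is equivalent to (\ref{pow}).

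Third, with the equivalence in hand, I would simply invoke Theorem 2.7 (``The inequality (\ref{power}) holds for all positive integers $s$'' — i.e.\ Theorem 1.1 applied to $F$ a power of the Bergman kernel, or more directly the statement of Theorem 1.3 restricted to integer $s$, which the excerpt labels as following from Theorem 2.7) with $s = n$ for $n = 2, 3, \dots$. This immediately gives the displayed inequality. The case $n = 1$ is excluded only because both sides then reduce to the trivial identity $\Gamma(2)\Gamma(2(r-1))/(\Gamma(r))^2 = \Gamma(2(r-1))/(\Gamma(r))^2$, which is an equality, so there is nothing to state.

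I do not expect any genuine obstacle here: the corollary is a transcription of an already-proved inequality into Gamma-function language, and the only thing requiring care is the bookkeeping of constants in the passage through Proposition \ref{prop} — matching the powers of $2$, the factors of $\pi$, and the shift between $\Gamma(2s)$ and $(2s-1)\Gamma(2s-1)$ so that the constant $C_s$ comes out correctly. That computation is routine but must be done cleanly to be convincing.
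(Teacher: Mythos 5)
Your proposal is correct and follows essentially the same route as the paper: the Gamma-function inequality is (\ref{pow}) specialized to $s=n$, and the paper likewise obtains it by combining the integer case of the conjecture (Theorem \ref{thm:integercase}, applied to $F(z)=(z+i)^{-r}$) with the equivalence of (\ref{questionpower}) and (\ref{pow}) furnished by Proposition \ref{prop}. Your bookkeeping of the constants in that equivalence (the powers of $2$ and $\pi$, and $\Gamma(2s)=(2s-1)\Gamma(2s-1)$) is accurate.
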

The test is indeed positive according to Theorem 1.3. This result is induced by the following theorem.
\begin{thm}
The estimate $(\ref{pow})$ is valid for all $r, s>1.$ This estimate is an equality if and only if $r=2.$
\end{thm}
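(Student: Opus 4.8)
The plan is to prove the Gamma-function inequality
\begin{equation*}
\frac{\Gamma(2s)\Gamma(2s(r-1))}{(\Gamma(rs))^2}\le\left(\frac{\Gamma(2(r-1))}{(\Gamma(r))^2}\right)^s\qquad(r,s>1),
\end{equation*}
since by Proposition \ref{prop} this is equivalent to (\ref{pow}), hence to (\ref{power}). Write $a=r-1>0$, so $r=a+1$ and the inequality reads $\Gamma(2s)\Gamma(2as)\le(\Gamma(rs))^2\bigl(\Gamma(2a)/\Gamma(a+1)^2\bigr)^s$. The natural device is to take logarithms and differentiate in $s$: set
\begin{equation*}
\psi_r(s):=\log\Gamma(2s)+\log\Gamma(2(r-1)s)-2\log\Gamma(rs)-s\log\frac{\Gamma(2(r-1))}{(\Gamma(r))^2}.
\end{equation*}
One checks $\psi_r(1)=0$ directly. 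If I can show that $\psi_r(s)\le 0$ for all $s>1$, with equality for all $s$ precisely when $r=2$, I am done. Since $\psi_r(1)=0$, it suffices to prove $\psi_r'(s)\le 0$ on $[1,\infty)$, or better, to understand the sign of $\psi_r'$ via convexity: $\psi_r'(s)=2\,\Psi(2s)+2(r-1)\Psi(2(r-1)s)-2r\,\Psi(rs)-\log\frac{\Gamma(2(r-1))}{(\Gamma(r))^2}$, where $\Psi=\Gamma'/\Gamma$ is the digamma function. Then $\psi_r''(s)=4\Psi'(2s)+4(r-1)^2\Psi'(2(r-1)s)-2r^2\Psi'(rs)$.

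The cleanest route is to recognize the left side as a Beta function: $\Gamma(2s)\Gamma(2(r-1)s)/\Gamma(2rs)=B(2s,2(r-1)s)$, so the claim becomes
\begin{equation*}
\frac{B(2s,2(r-1)s)\,\Gamma(2rs)}{(\Gamma(rs))^2}\le\left(\frac{B(2,2(r-1))\,\Gamma(2r)}{(\Gamma(r))^2}\right)^s,
\end{equation*}
and $\Gamma(2rs)/(\Gamma(rs))^2=2^{2rs-1}\Gamma(rs+\tfrac12)/(\sqrt\pi\,\Gamma(rs))$ by the duplication formula (already used in the excerpt). Using the Beta integral $B(2s,2(r-1)s)=\int_0^1 x^{2s-1}(1-x)^{2(r-1)s-1}dx$ one can try a direct comparison: the integrand at the ``extremal point'' and a Laplace/concavity argument. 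However, I expect the most robust argument is the derivative approach combined with the integral representation of the digamma/trigamma functions, namely $\Psi'(x)=\int_0^\infty \frac{t e^{-xt}}{1-e^{-t}}\,dt$. Substituting gives
\begin{equation*}
\psi_r''(s)=\int_0^\infty\frac{t}{1-e^{-t}}\Bigl(4e^{-2st}+4(r-1)^2e^{-2(r-1)st}-2r^2e^{-rst}\Bigr)dt,
\end{equation*}
and after the change of variable $t\mapsto t/s$ one is reduced to showing a fixed-sign property of $4e^{-2\tau}+4(r-1)^2e^{-2(r-1)\tau}-2r^2e^{-r\tau}$ weighted appropriately — but this combination is not single-signed, so one must be more careful and integrate by parts or use the full structure of $\psi_r'$ rather than $\psi_r''$ alone.

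So the actual plan I would carry out: (i) reduce to $\psi_r(s)\le0$ on $(1,\infty)$ with $\psi_r(1)=0$; (ii) show $\psi_r'(1)\le 0$ and that $\psi_r'$ is \emph{monotone} (or changes sign at most once from $-$ to $+$ and then back, ruling out positivity via the $s\to\infty$ asymptotics from Stirling, which give $\psi_r(s)=-\tfrac12\log(\pi s)+O(1)\to-\infty$ when $r\ne2$ and $\equiv0$ when $r=2$); (iii) handle the equality case by noting that $r=2$ makes the left side exactly $B(2s,2s)\Gamma(2s)\cdots$ collapse — concretely when $r=2$ the inequality (\ref{pow}) reads $\Gamma(2s)^2/\Gamma(2s)^2=1=1^s$, an identity, so equality holds for all $s$; conversely if equality holds for some $s>1$ then $\psi_r\equiv0$ near that point forces, via the asymptotic expansion, the cancellation of the $\log s$ term, which pins down $r=2$. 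The main obstacle is step (ii): proving the requisite convexity/monotonicity of $\psi_r$ rigorously, since the second derivative is a difference of completely monotone functions with no obvious sign. I would attack this either by writing $\psi_r'(s)$ itself through Malmstén's integral $\Psi(x)=\int_0^\infty\bigl(\frac{e^{-t}}{t}-\frac{e^{-xt}}{1-e^{-t}}\bigr)dt$ and exhibiting the integrand of $\psi_r'(s)$ as having a single sign change in $t$ (a ``variation-diminishing'' argument), or, failing that, by a more hands-on estimate of $B(2s,2(r-1)s)$ against its value at $s=1$ using log-convexity of $\Gamma$ and the arithmetic–geometric inequality $2rs=2s+2(r-1)s$.
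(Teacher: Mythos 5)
Your reduction to the Gamma-function inequality $\Gamma(2s)\Gamma(2(r-1)s)/\Gamma(rs)^2\le\bigl(\Gamma(2(r-1))/\Gamma(r)^2\bigr)^s$ is exactly right, and matches the paper's Proposition \ref{prop}. From there, however, you fix $r$ and try to push the $s$-derivative through, and — as you candidly note — you hit a wall: $\psi_r''(s)=4\Psi'(2s)+4(r-1)^2\Psi'(2(r-1)s)-2r^2\Psi'(rs)$ has no obvious sign, the proposed variation-diminishing and Laplace arguments are left as sketches, and nothing in what you wrote actually closes step (ii). So as it stands there is a genuine gap: the key analytic fact is identified as missing but not supplied.

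The paper's proof gets around this by reorganizing the \emph{other} variable. Write the log-inequality as
\begin{equation*}
\bigl(\log\Gamma(2s)-s\log\Gamma(2)\bigr)+\bigl(\log\Gamma(2(r-1)s)-s\log\Gamma(2(r-1))\bigr)-2\bigl(\log\Gamma(rs)-s\log\Gamma(r)\bigr)\le 0,
\end{equation*}
and observe that this is the midpoint concavity statement $\tfrac12\bigl(g(2)+g(2(r-1))\bigr)\le g(r)$ for $g(u):=\log\Gamma(us)-s\log\Gamma(u)$, since $r=\tfrac12\bigl(2+2(r-1)\bigr)$. Now $g''(u)=s\bigl(s\Psi'(us)-\Psi'(u)\bigr)$, which is $<0$ for $s>1$ precisely because $t\mapsto t\Psi'(t)$ is strictly decreasing — a clean, one-variable fact that the paper proves via the recurrence relations $\Psi'(x+1)=\Psi'(x)-1/x^2$, $\Psi''(x+1)=\Psi''(x)+2/x^3$ and the Malmst\'en-type integral representations you already invoke, plus the asymptotics $\Psi'(x)+x\Psi''(x)\to 0$. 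Strict concavity then also delivers the equality case: equality in the midpoint inequality forces $2=2(r-1)$, i.e.\ $r=2$. In short, your reduction is correct but you should swap the roles of $r$ and $s$: view the inequality as concavity in the $u$-variable at a fixed $s$, not as a statement about $s$-derivatives at a fixed $r$; that is what turns the intractable trigamma combination into the tractable monotonicity of $t\Psi'(t)$.
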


\begin{proof}
Without loss of generality, we assume that $r\neq 2.$ Taking the logarithm, we must prove the following estimate
\begin{equation}\label{concave}
\log \Gamma (2s)-s\log \Gamma (2) +\log \Gamma (2s(r-1))-s\log \Gamma (2(r-1))
\end{equation}
$$-2[\log \Gamma (rs)-s\log \Gamma (r)]\leq 0 
\quad \quad (r, s>1).$$
In fact, it suffices to show that for every $s>1,$ the $\mathcal C^\infty$ function $g=g_s$ defined on $(0, \infty)$ by
$$g(u)=\log \Gamma (us)-s\log \Gamma (u)$$ 
is concave. We prove that for every $s>1,$ this function $\hskip 1truemm g$ 
is strictly concave. This will imply that the inequality (\ref{concave}) (and equivalently, the inequality (\ref{pow})) is strict except for $r=2.$ We adopt the usual notation
$$\psi (x)=\left (\log \Gamma \right )'(x).$$
We have
$$g'(u)=s\psi (us)-s\psi (u)$$
and
$$g''(u)=s^2\psi' (us)-s\psi' (u)=s[s\psi' (us)-\psi' (u)].$$
To obtain that $g'' (u) <0,$ it is enough to prove that $s\psi' (us)-\psi' (u)< 0 \quad (u>0).$ This reduces to the following lemma.

\begin{lemma}
The positive function $h,$ defined on $(0, \infty)$ by $h(t):=t\psi' (t),$ is strictly decreasing.
\end{lemma}

\begin{proof}[Proof of the Lemma]
We have
$$h'(t)=\psi' (t)+t\psi'' (t).$$
To simplify the notation, we call $\eta$ this derivative function. We shall prove that $\eta(x)<0$ for every $x>0.$ We recall the asymptotic expansions as $x\rightarrow \infty$:
$$\psi' (x)\sim \frac 1x+\frac 1{2x^2}+\mathcal O (\frac 1{x^3)}$$
and 
$$\psi'' (x)\sim -\frac 1{x^2}-\frac 1{x^3}+\mathcal O (\frac 1{x^4)}.$$
This implies that $\eta(x)=-\frac 1{2x^2}+\mathcal O (\frac 1{x^3}).$ So $\eta(\infty)=\lim \limits_{x\rightarrow \infty} \eta(x)=0.$ We also recall the following formulas (cf. e.g. \cite{AS}):
$$\psi'(x+1)=\psi' (x)-\frac 1{x^2}$$
and 
$$\psi'' (x+1)=\psi'' (x)+\frac 2{x^2}.$$
This yields
$$\eta(x+1)-\eta(x)=\psi'' (x)=\psi''(x)+\frac 1{x^2}+\frac 2{x^3}.$$
From the formula (cf. e.g. \cite{AS})
$$\psi (x)+\gamma=\int_0^\infty \frac {e^{-t}-e^{-xt}}{1-e^{-t}}dt,$$
where $\gamma$ denotes the Euler constant, we deduce that
$$\psi' (t)=\int_0^\infty \frac {te^{-xt}}{1-e^{-t}}dt$$ 
and 
$$\psi'' (t)=-\int_0^\infty \frac {t^2e^{-xt}}{1-e^{-t}}dt.$$
We obtain
$$
\begin{array}{clcr}
\eta(x+1)-\eta(x)&=-\int_0^\infty \frac {t^2e^{-xt}}{1-e^{-t}}dt+\int_0^\infty te^{-xt}dt+\int_0^\infty t^2e^{-xt}dt\\
&=\int_0^\infty \left (-\frac {te^{t}}{e^t-1}+1+t    \right )te^{-xt}dt\\
&=\int_0^\infty \left (1-\frac t{e^t-1} \right )te^{-xt}dt>0,
\end{array}
$$
since $1-\frac t{e^t-1}>0$ for every positive $t.$ Next we get $g(x+1)>g(x)$ for every $x>0.$ This gives 
$$\eta(x)< \eta(x+n) \quad \quad (n=1, 2, \cdots).$$
Letting $n$ tend to $\infty,$ we conclude that $h'(x)=\eta(x)< \eta(\infty)=0$ for every $x>0.$ The proof is complete.
\end{proof}
The proof of the inequality (3.2) is then complete. This finishes the proof of Theorem 3.2.
\end{proof}

\subsection{Some consequences of the Lieb-Solovej conjecture}
In this subsection, we assume that the Lieb-Solovej conjecture is true and we shall first draw as a consequence 
the following dual estimate.
\begin{cor}
Assume that the Lieb-Sobolev conjecture is true. Let $s>1.$ Then
the identity operator is bounded from $A^{\frac {2s}{2s-1}}_{-\frac {2(s-1)}{2s-1}} (\Pi^+)$ to $A^2 (\Pi^+)$ with operator norm $C_s^{\frac 1{2s}}.$
\end{cor}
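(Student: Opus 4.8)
The plan is to dualize the Lieb-Solovej inequality \eqref{eq:bsconject} by a standard duality argument for Bergman-type embeddings. Write $p=2s$ and note its conjugate exponent is $p'=\frac{2s}{2s-1}$; also set the weight exponent $\nu=2s-2$ so that $A^{2s}_{2s-2}(\Pi^+)$ is exactly the target space in the conjecture. The conjecture, in the form \eqref{bsconject}, says that the identity inclusion $\iota\colon A^2(\Pi^+)\hookrightarrow A^{2s}_{2s-2}(\Pi^+)$ is bounded with $\|\iota\|=C_s^{1/(2s)}$. The corollary is the assertion that the adjoint-type inclusion $A^{p'}_{\mu}(\Pi^+)\hookrightarrow A^2(\Pi^+)$, with $\mu=-\frac{2(s-1)}{2s-1}$ the Bergman-conjugate weight of $\nu$ relative to $L^2$, is bounded with the \emph{same} operator norm $C_s^{1/(2s)}$. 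The value $\mu$ is forced: if $dV_\nu=y^\nu\,dx\,dy$, then the conjugate weight making the pairing $\int F\bar G\,dx\,dy$ behave like a pairing of $L^p(dV_\nu)$ against $L^{p'}(dV_{\mu'})$ satisfies $\frac{\nu}{p}+\frac{\mu'}{p'}=0$, and chasing constants gives $\mu'=-\nu/(p-1)=-(2s-2)/(2s-1)=-\frac{2(s-1)}{2s-1}$, which is the exponent in the statement.

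The key steps, in order: (1) Identify, via the reproducing formula \eqref{repro}, that for $F\in A^2(\Pi^+)$ one has $F(z)=\langle F, B_1(\cdot,z)\rangle_{L^2(dx\,dy)}$, so that $\|F\|_{A^2}=\sup\{|\langle F,G\rangle_{L^2}|:\ G\in A^2,\ \|G\|_{A^2}\le 1\}$; this realizes $A^2(\Pi^+)$ as its own dual under the unweighted $L^2$ pairing. (2) For $G\in A^2(\Pi^+)$ write, using Hölder with exponents $p=2s$ and $p'=\frac{2s}{2s-1}$ against the weight factor $y^{(2s-2)/(2s)}\cdot y^{-(2s-2)/(2s)}$,
\[
|\langle F,G\rangle_{L^2}|=\left|\int_{\Pi^+}F\,\bar G\,dx\,dy\right|
\le\left(\int_{\Pi^+}|F|^{2s}y^{2s-2}\,dx\,dy\right)^{\frac1{2s}}\left(\int_{\Pi^+}|G|^{\frac{2s}{2s-1}}y^{-\frac{2s-2}{2s-1}}\,dx\,dy\right)^{\frac{2s-1}{2s}}.
\]
(3) Apply the conjecture to the first factor: it is $\le C_s^{1/(2s)}\|F\|_{A^2}$. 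Taking the supremum over $\|F\|_{A^2}\le1$ and recalling step (1) gives, for every $G\in A^2(\Pi^+)$,
\[
\|G\|_{A^2(\Pi^+)}\le C_s^{\frac1{2s}}\,\|G\|_{A^{2s/(2s-1)}_{-2(s-1)/(2s-1)}(\Pi^+)},
\]
which is the claimed boundedness of the identity with operator norm at most $C_s^{1/(2s)}$. (4) For the reverse inequality on the norm (so that it is exactly $C_s^{1/(2s)}$ and not merely a bound), test on the extremizer: by Theorem~1.3 and Corollary~1.4 the function $F_0(z)=(z-\bar z_0)^{-2}$ saturates \eqref{eq:bsconject}; by the equality case of Hölder the corresponding extremal $G$ in step (2) is the matching power of $F_0$, i.e. $G_0(z)=(z-\bar z_0)^{-2}$ up to a constant (since $|F|^{2s-1}y^{(2s-2)(2s-1)/(2s)}$ must be proportional to $|G|$ weighted appropriately, and a short computation shows this pins $G_0$ down to a power of the Bergman kernel again), and one checks that equality propagates through all three inequalities.

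The main obstacle is step (4): one must verify that the Hölder equality case, the reproducing-formula supremum, and the conjecture's equality case can be achieved \emph{simultaneously} by a single pair $(F_0,G_0)$ of powers of the Bergman kernel, i.e. that the chain of inequalities is tight. This requires checking that $B_1(\cdot,z_0)$ pairs correctly against $G_0$ and that the weight bookkeeping in the Hölder step is consistent with the weight in Proposition~2.1; it is a finite computation with Gamma functions of the type carried out in the proof of Proposition~2.1, but it is where all the constants have to line up. If one is content to state only the upper bound $C_s^{1/(2s)}$ for the operator norm, steps (1)--(3) suffice and the argument is completely routine.
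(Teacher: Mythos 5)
Your steps (1)--(3) reproduce exactly the argument in the paper: H\"older against the weight split $y^{(2s-2)/(2s)}\cdot y^{-(2s-2)/(2s)}$, the conjecture applied to the first factor, and self-duality of $A^2(\Pi^+)$ under the unweighted pairing $\langle F,G\rangle=\int F\bar G\,dx\,dy$; the exponent $-\tfrac{2(s-1)}{2s-1}$ falls out of the conjugate-exponent bookkeeping precisely as you computed. So for the upper bound $\|G\|_{A^2}\le C_s^{1/(2s)}\|G\|_{A^{2s/(2s-1)}_{-2(s-1)/(2s-1)}}$ your proof and the paper's coincide.

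Where you diverge is your step (4), and it is worth flagging: the paper does \emph{not} attempt to prove the reverse inequality at all --- it simply asserts the operator norm after establishing the one-sided bound. Your instinct that sharpness requires checking that the H\"older equality case, the duality supremum, and the conjecture's extremizer can all be saturated by one pair $(F_0,G_0)$ is correct, but these conditions are actually in tension. Equality in the duality step forces $G_0\propto F_0$; equality in H\"older forces $|G_0|\propto|F_0|^{2s-1}y^{2s-2}$; combining the two forces $|F_0|\,y$ to be constant on $\Pi^+$, which fails for $F_0(z)=(z-\bar z_0)^{-2}$ (by Corollary~2.3 its maximum of $|F_0|y$ is an interior maximum, not a constant). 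So the three inequalities cannot simultaneously be equalities for any nonzero $F_0$, and the duality argument alone only yields the upper bound on the norm. Your honest caveat at the end --- that steps (1)--(3) suffice if one only claims the upper bound --- is exactly the right assessment of both your proposal and, in fact, of the paper's own proof.
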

\begin{proof}
For all $F\in A^2 (\Pi^+)$ and $G\in A^{\frac {2s}{2s-1}}_{-\frac {2(s-1)}{2s-1}} (\Pi^+),$ an application of the H\"older inequality gives
$$\left |\int_{\Pi^+} F(x+iy)\overline {G(x+iy)}dxdy\right |$$
$$=\left |\int_{\Pi^+} F(x+iy)y^{\frac {2s-2}{2s}}\overline {G(x+iy)}y^{-\frac {2s-2}{2s}}dxdy\right |$$
$$\leq \left (\int_{\Pi^+} |F(x+iy)|^{2s}y^{2s-2}dxdy\right )^{\frac 1{2s}}\left (\int_{\Pi^+} |G(x+iy)|^{\frac {2s}{2s-1}}y^{-\frac {2s-2}{2s-1}}dxdy\right )^{\frac {2s-1}{2s}}.$$
We deduce from the conjecture that
$$\sup \limits_{F\in A^2 (\Pi^+), \hskip 1truemm \left \Vert F\right \Vert_{A^2 (\Pi^+)}=1}\left |\int_{\Pi^+} F(x+iy)\overline {G(x+iy)}dxdy\right |$$
$$\leq C_s^{\frac 1{2s}}\left (\int_{\Pi^+} |G(x+iy)|^{\frac {2s}{2s-1}}y^{-\frac {2s-2}{2s-1}}dxdy\right )^{\frac {2s-1}{2s}}.$$ 
We recall that the dual of the (Hilbert-)Bergman space $A^2 (\Pi^+)$ is $A^2 (\Pi^+)$ with respect to the duality pairing
$$<F, G>=\int_{\Pi^+} F(x+iy)\overline {G(x+iy)}dxdy.$$ 
We conclude that for every $G\in A^{\frac {2s}{2s-1}}_{-\frac {2(s-1)}{2s-1}} (\Pi^+),$ we have
$$\left \Vert G\right \Vert_{A^2 (\Pi^+)}=\left (\int_{A^2 (\Pi^+)} |G(x+iy)|^2dxdy \right )^{\frac 12}$$
$$\leq C_s^{\frac 1{2s}}\left (\int_{\Pi^+} |G(x+iy)|^{\frac {2s}{2s-1}}y^{-\frac {2s-2}{2s-1}}dxdy\right )^{\frac {2s-1}{2s}}.$$
So the identity operator is bounded from $A^{\frac {2s}{2s-1}}_{-\frac {2(s-1)}{2s-1}} (\Pi^+)$ to $A^2 (\Pi^+)$ with operator norm $C_s^{\frac 1{2s}}.$ 
\end{proof}
\vskip 1truemm
We next prove Theorem 1.4. Let $F\in A^2 (\Pi^+)$ such that $\left \Vert F\right \Vert_{A^2 (\Pi^+)}=1.$ Assume that $(1)$ is true, i.e. 
 equivalently
$$\left (\int_{\Pi^+} \left (\left \vert F(x+iy)\right \vert y\right )^{2s-2}\left \vert F(x+iy)\right \vert^2dxdy\right )^{\frac 1{2s-2}} \leq \frac {1}{2\sqrt \pi (2s-1)^{\frac 1{2s-2}}}.$$
The measure $\left \vert F(x+iy)\right \vert^2dxdy$ is a probability measure on $\Pi^+.$ Letting $s$ tend to $1,$ we obtain (cf. e.g. \cite{R}, page 71, exercise 5):
$$\exp \left (\int_{\Pi^+} \log \left [\left \vert F(x+iy)\right \vert y  \right ]\left \vert F(x+iy)\right \vert^2dxdy \right )\leq \frac 1{2e\sqrt \pi}.$$
Taking the logarithm of both sides, we obtain the estimate (\ref{step}). In other words, $(2)$ is true.

Conversely, assume that $(2)$ is true. Taking the logarithm of both sides of (\ref{local}), we obtain the following equivalent form
$$\varphi (s):=\log \left (\int_{\Pi^+}|F(x+iy)|^{2s}y^{2s-2}dxdy \right )-(1-s)\log \pi +\log (2s-1)$$
$$+(2s-2)\log 2\leq 0.$$
Since $\varphi (1)=0,$ it suffices to prove that $\varphi' (1)\leq 0.$ For $s\geq 1,$ we have
$$
\begin{array}{clcr}
\varphi' (s)&=\frac {\frac d{ds}\int_{\Pi^+}|F(x+iy)|^{2s}y^{2s-2}dxdy}{\int_{\Pi^+}|F(x+iy)|^{2s}y^{2s-2}dxdy}+\log \pi+\frac 2{2s-1}+2\log 2\\
&=\frac {\frac d{ds}\int_{\Pi^+}e^{2(s-2)\left (\log (|F(x+iy)|y)\right )}|F(x+iy)|^2dxdy}{\int_{\Pi^+}|F(x+iy)|^{2s}y^{2s-2}dxdy}+\log \pi+\frac 2{2s-1}+2\log 2\\
&=\frac {\int_{\Pi^+}2\log (|F(x+iy)|y)|F(x+iy)|^{2s}y^{2s-2}dxdy}{\int_{\Pi^+}|F(x+iy)|^{2s}y^{2s-2}dxdy}+\log \pi+\frac 2{2s-1}+2\log 2.
\end{array}$$
In particular,
$$\varphi' (1)=2\int_{\Pi^+}\log (|F(x+iy)|y)|F(x+iy)|^{2}dxdy+\log \pi+1+2\log 2.$$
The required estimate $\varphi' (1)\leq 0$ is equivalent to
$$\int_{\Pi^+}\log (2\sqrt \pi|F(x+iy)|y)|F(x+iy)|^{2}dxdy\leq -1,$$
which reduces to (2). In other words, $(1)$ is true.
\vskip 5truemm
We finally prove Theorem 1.5,  which provides   a less precise estimate than the estimate (\ref{step}).
The measure $\left \vert F(x+iy)\right \vert^2dxdy$ is a probability measure on $\Pi^+.$ The function $\varphi (t):=\log \frac 1{2\sqrt \pi t}$ is a convex function on $(0, \infty).$ An application of Jensen's inequality (cf. e.g. \cite{R}, Theorem 3.3) gives
$$\int_{\Pi^+} \log \left [\frac 1{2\sqrt \pi|F(x+iy)|y}  \right ]|F(x+iy)|^{2}dxdy$$
$$\geq \log \frac 1{2\sqrt \pi\int_{\Pi^+} \left (|F(x+iy)|y\right )|F(x+iy)|^{2}dxdy}.$$
Now, by the Schwarz inequality, we obtain
$$\int_{\Pi^+} \left (|F(x+iy)|y\right )|F(x+iy)|^{2}dxdy$$
$$ \leq \left (\int_{\Pi^+} \left (|F(x+iy)|y\right )^2|F(x+iy)|^{2}dxdy \right )^{\frac 12}\leq \left (\frac {\pi^{-1}}{3\times 2^2} \right )^{\frac 12}=\frac 1{2\sqrt {3\pi}},$$
since the conjecture is true for $s=2.$ We conclude that
$$\int_{\Pi^+} \log \left [\frac 1{2\sqrt \pi|F(x+iy)|y}  \right ]|F(x+iy)|^{2}dxdy \geq \log \sqrt 3=\frac {\log 3}2.$$

\section{The Lieb-Solovej conjecture on the unit disc}
\subsection{The statement of the conjecture}
We denote by $\mathbb D$ the unit disc in the complex plane and by $dm$ the Lebesgue area measure in the complex plane. Via a transfer principle from the upper half-plane $\Pi^+$ to the unit disc $\mathbb D,$ the Lieb-Solovej conjecture takes the following form on $\mathbb D.$
\vskip 2truemm
\noindent
{\textbf {Conjecture.}} Let $s>1.$ For every $G\in A^2 (\mathbb D),$ we have
$$\int_{\mathbb D} \left \vert G(z)\right \vert^{2s}(1-|z|^2)^{2s-2}dm(z)\leq \frac {\pi^{1-s}}{2s-1}\left (\int_{\mathbb D}\left \vert G(z)\right \vert^{2}dm(z) \right )^s,$$
with equality when $G(z)$ is a Bergman kernel function $\frac 1\pi \left (1-z\cdot \overline {z_0}\right )^{-2},$ \hskip 1truemm
$z_0\in \mathbb D,$ of the unit disc $\mathbb D.$ 
\vskip 2truemm
\begin{proof}
We recall the Lieb-Solovej conjecture on the upper half-plane $\Pi^+.$
\begin{equation}\label{conj}
\int_{\Pi^+} \left \vert F(x+iy)\right \vert^{2s}y^{2s-2}dxdy
\end{equation}
$$\leq \frac {\pi^{1-s}}{(2s-1)2^{2s-2}}\left (\left \vert F(x+iy)\right \vert^{2}dxdy \right )^s \quad \quad (F\in A^2 (\Pi^+)).$$
We apply the linear fractional transformation $\Phi$ from $\mathbb D$ to $\Pi^+:$
$$z=x+iy=\Phi (w):=i\frac {1+w}{1-w}.$$
Then $y=\Im m \hskip 1truemm z=\Im m \left (i\frac {1+w}{1-w}\right )=\frac {1-|w|^2}{|1-w|^2}$ and $\Phi' (w)=\frac {2i}{(1-w)^2}.$ So the estimate (\ref{conj}) takes the form
$$\int_{\mathbb D} \left \vert F(\Phi(w))\right \vert^{2s}\left (\frac {1-|w|^2}{|1-w|^2}\right )^{2s-2}\frac 4{|1-w|^4}dm(w)$$
$$=2^{-2s+2}\int_{\mathbb D} \left \vert F(\Phi(w))\right \vert^{2s}\left (\frac {2}{|1-w|^2}\right )^{2s}(1-|w|^2)^{2s-2}dm(w)$$
$$=2^{-2s+2}\int_{\mathbb D} \left \vert F(\Phi(w))\Phi' (w)\right \vert^{2s}(1-|w|^2)^{2s-2}dm(w)$$
$$\stackrel {(\ref{conj})} \leq \frac {\pi^{1-s}}{(2s-1)2^{2s-2}}\int_{\mathbb D} \left \vert F(\Phi(w))\Phi' (w)\right \vert^{2}dm(w).$$
Without loss of generality, we take $G=(F\circ \Phi)\Phi':$ the result follows.\\
For equality, apply the change of variable formula in the Bergman kernel.
\end{proof}

As mentioned in the introduction, Bayart, Brevig, Haimi, Ortega-Cerd\`a and Perfekt \cite{BBHOP} proved differently the Lieb-Solovej conjecture for $s=2, 3, 4, \cdots$ and they even settled it for $s=\frac 32, \frac 52, \frac 72, \cdots.$ More precisely, let $\alpha >1$ and $0<p<\infty,$ and define the Bergman $B^p_\alpha (\mathbb D)$ as the space of holomorphic $f$ on the unit disc $\mathbb D$ whose norm 
$$\left \Vert f\right \Vert_{B^p_\alpha (\mathbb D)}:=\left (\int_{\mathbb D} |f(w)|^p (\alpha-1)\left (1-|w|^2 \right )^{\alpha-2}\frac {dm(z)}\pi \right )^{\frac 1p}$$
is finite. For $\alpha_0=\frac {1+\sqrt {17}}4,$ these authors prove the following theorem.

\begin{thm}\cite[Theorem 1]{BBHOP}
Let $\alpha\geq \alpha_0$ and $0<p<\infty.$ For every $f\in B^p_\alpha (\mathbb D),$
\begin{equation}\label{bay}
\left \Vert f\right \Vert_{B^{\frac {p(\alpha+1)}\alpha}_{\alpha+1} (\mathbb D)}\leq \left \Vert f\right \Vert_{B^p_\alpha (\mathbb D)}.
\end{equation}
Moreover, if $\alpha>\alpha_0,$ equality holds in {\rm {(\ref{bay})}} if and only if there exist a complex constant $C$ and a point $\xi$ in $\mathbb D$ such that $f(w)=\frac C{(1-\bar \xi\cdot w)^{\frac {2\alpha}p}}.$ 
\end{thm}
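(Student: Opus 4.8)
The plan is to prove the contractive embedding $(\ref{bay})$ by a variational argument that also identifies its extremal functions. Write $d\mu_\beta(w)=(\beta-1)(1-|w|^2)^{\beta-2}\,\frac{dm(w)}{\pi}$, a probability measure on $\mathbb D$ for $\beta>1$, so that $\|f\|_{B^p_\beta}^p=\int_{\mathbb D}|f|^p\,d\mu_\beta$, and set $q=\frac{p(\alpha+1)}{\alpha}$. After normalising $\|f\|_{B^p_\alpha}=1$ the inequality reads $\int_{\mathbb D}|f|^q\,d\mu_{\alpha+1}\le1$, with equality for $f\equiv1$, so the assertion is exactly that
$$S:=\sup\bigl\{\|f\|_{B^q_{\alpha+1}}:\ f\in B^p_\alpha,\ \|f\|_{B^p_\alpha}=1\bigr\}=1,$$
together with, when $\alpha>\alpha_0$, the statement that the extremisers form the $\mathrm{Aut}(\mathbb D)$--orbit of the constants.

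First I would record two elementary ingredients. For $a\in\mathbb D$ with $\varphi_a$ the involution swapping $0$ and $a$, the operator $U_af=(f\circ\varphi_a)(\varphi_a')^{\alpha/p}$ is at once an isometry of $B^p_\alpha$ and of $B^q_{\alpha+1}$ --- this is where the relation $q=p(\alpha+1)/\alpha$ enters, since it makes $(\alpha+1)/q=\alpha/p$, so that the weights $\alpha-2$ and $\alpha-1$ transform by the same power of $|\varphi_a'|$ --- hence the ratio $\|f\|_{B^q_{\alpha+1}}/\|f\|_{B^p_\alpha}$ is $\mathrm{Aut}(\mathbb D)$--invariant and $U_\xi1$ is a constant times $(1-\bar\xi\cdot w)^{-2\alpha/p}$. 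Secondly, subharmonicity of $|f|^p$ against the radial probability measure $\mu_\alpha$ gives $|f(0)|^p\le\|f\|_{B^p_\alpha}^p$, and transporting this by $U_w$ yields the sharp pointwise bound $|f(w)|^p(1-|w|^2)^\alpha\le\|f\|_{B^p_\alpha}^p$ for every $0<p<\infty$. This bound makes $S<\infty$ clear (it already gives the non-sharp constant $\alpha/(\alpha-1)$) and, through a concentration--compactness argument in which a maximising sequence is recentred by automorphisms $U_{a_n}$ --- legitimate by invariance --- while the pointwise bound controls the boundary part of the $B^q_{\alpha+1}$--integral, it produces an extremiser $f_\ast$ with $\|f_\ast\|_{B^p_\alpha}=1$.

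Using invariance once more I would normalise $f_\ast$ so that its centre of mass $\int_{\mathbb D}w\,|f_\ast|^p\,d\mu_\alpha$ vanishes; a separate argument shows $f_\ast$ is zero-free (a zero can be divided out by a real-power Blaschke factor so as to strictly increase the ratio, or the Euler--Lagrange equation below forbids it), and then writing $f_\ast=F^{2/p}$ reduces the extremal problem to $p=2$, where $B^2_\alpha$ is the reproducing-kernel space with kernel $(1-\bar wz)^{-\alpha}$. The Euler--Lagrange equation for $f_\ast$ (varying $\int|f|^q\,d\mu_{\alpha+1}$ against $\int|f|^2\,d\mu_\alpha$, with $q-2=2/\alpha$) says that the holomorphic projection of $|f_\ast|^{q-2}\overline{f_\ast}\,(1-|w|^2)$ is a scalar multiple of $f_\ast$. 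Expanding a perturbation of the constant $1$ in Taylor modes, the modes $w^k$ with $k\ge2$ contribute a strictly negative second variation of $\log$ of the ratio, while the $w^1$--mode is tangent to the $\mathrm{Aut}(\mathbb D)$--orbit and the second variation degenerates there; pushing the expansion to fourth order in the $w^1$--direction transverse to the orbit leaves a single scalar quantity whose sign is that of $2\alpha^2-\alpha-2$, that is, it is $\le0$ precisely when $\alpha\ge\alpha_0=\frac{1+\sqrt{17}}{4}$. Combined with the centring normalisation and the Euler--Lagrange equation this forces $f_\ast$ to be constant, whence $S=\|1\|_{B^q_{\alpha+1}}/\|1\|_{B^2_\alpha}=1$; for $\alpha>\alpha_0$ the inequality $2\alpha^2-\alpha-2>0$ is strict, the ratio is strictly concave transverse to the orbit, and the only extremisers are the $U_\xi1$, i.e.\ $f(w)=C(1-\bar\xi\cdot w)^{-2\alpha/p}$ after undoing the reduction to $p=2$.

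The hard part will be this last step: turning the Euler--Lagrange equation, together with the centring, into the rigidity conclusion that $f_\ast$ is constant. One has to (a) handle all Taylor modes simultaneously rather than one at a time, the cross terms surviving only because the $w^1$--mode sits along the automorphism orbit where the second variation is flat, and (b) verify that the fourth-order term transverse to the orbit has the right sign, which is exactly the quadratic condition $2\alpha^2-\alpha-2\ge0$ and hence the source of the restriction $\alpha\ge\alpha_0$. A secondary obstacle is the existence of extremisers itself: since $q>p$ the embedding is not compact, so the automorphism group is genuinely needed to keep a maximising sequence from escaping to the boundary.
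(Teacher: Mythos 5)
First, a point of reference: the paper does not prove this statement. Theorem 4.1 is quoted verbatim from \cite{BBHOP}, and the only indication of method recorded here is that Bayart, Brevig, Haimi, Ortega-Cerd\`a and Perfekt solve a minimization problem in the Sobolev space $W^{1,2}(\mathbb D)$. Measured against that, your outline gets the structural skeleton right: the operators $U_af=(f\circ\varphi_a)(\varphi_a')^{\alpha/p}$ really are simultaneous isometries of $B^p_\alpha(\mathbb D)$ and $B^{p(\alpha+1)/\alpha}_{\alpha+1}(\mathbb D)$ because $(\alpha+1)/q=\alpha/p$; the pointwise bound $|f(w)|^p(1-|w|^2)^\alpha\le\|f\|_{B^p_\alpha}^p$ and the resulting crude constant $\alpha/(\alpha-1)$ are correct; and $\alpha_0=\frac{1+\sqrt{17}}4$ is indeed the positive root of $2\alpha^2-\alpha-2=0$, with the degeneracy sitting on the mode tangent to the M\"obius orbit. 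So you have correctly reverse-engineered where the threshold comes from.

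The proposal nevertheless has a genuine gap at the decisive step, and it is the one you flag yourself. Your plan is: extract an extremiser $f_\ast$ by concentration--compactness, write its Euler--Lagrange equation, and then expand the ratio to second and fourth order \emph{around the constant function}. That expansion can only show that the constant is a \emph{local} maximiser when $\alpha\ge\alpha_0$; it gives no information about a critical point that is not already close to the constants, and the Euler--Lagrange equation together with the centring normalisation does not by itself exclude such critical points. The sentence ``combined with the centring normalisation and the Euler--Lagrange equation this forces $f_\ast$ to be constant'' is exactly the assertion that requires proof, and no mechanism for it is supplied; without it neither the inequality nor the characterisation of equality follows. This is precisely the trap the argument of \cite{BBHOP} is designed to avoid: there the problem is reduced to a \emph{global} coercivity statement for a functional on $W^{1,2}(\mathbb D)$, verified for every competitor (mode by mode), rather than to a local expansion at a candidate extremiser. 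Two secondary weak points: (i) the zero-freeness of $f_\ast$ is not established by ``dividing out a zero by a real-power Blaschke factor,'' since that operation increases $|f|$ pointwise and hence increases \emph{both} norms, leaving the direction of change of the ratio unclear (contractive zero-divisors in Bergman spaces are a delicate matter in their own right); (ii) the existence of an extremiser is itself a nontrivial step --- the embedding is non-compact, and the recentred maximising sequence argument is only gestured at.
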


From this theorem, they deduce the following corollary, which settles the Lieb-Solovej conjecture for $s=2, 3, 4,\cdots$ and $s=\frac 32, \frac 52, \frac 72,\cdots.$
\begin{cor}\cite[Corollary 2]{BBHOP}\label{cor:bayart}
Let $f\in B^2_2 (\mathbb D).$ Then
$$\left \Vert f\right \Vert_{B^2_2 (\mathbb D)}\geq \left \Vert f\right \Vert_{B^3_3 (\mathbb D)} \geq \left \Vert f\right \Vert_{B^4_4 (\mathbb D)} \geq \cdots.$$
\end{cor}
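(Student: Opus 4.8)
The plan is to deduce the chain of inequalities directly from Theorem 4.3 (Bayart–Brevig–Haimi–Ortega-Cerd\`a–Perfekt) by iterating it along integer values of $\alpha$. First I would observe that for the integer exponents we are after, the relevant values of $\alpha$ are $\alpha = 2, 3, 4, \ldots$, all of which satisfy $\alpha \ge \alpha_0$ since $\alpha_0 = \frac{1+\sqrt{17}}{4} \approx 1.28 < 2$. Hence Theorem 4.3 is applicable at every step of the iteration, with no loss.

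The key computational step is to keep track of the pair $(p,\alpha)$ under the substitution in \eqref{bay}: starting from $(p,\alpha)$ one lands at $\left(\frac{p(\alpha+1)}{\alpha},\, \alpha+1\right)$. I would verify by a short induction that if one starts at $(p_0,\alpha_0') = (2,2)$, then after $k$ applications one reaches $(p_k, \alpha_k) = (k+2,\, k+2)$; indeed $\alpha_k = 2+k$ and $p_{k+1} = p_k \cdot \frac{\alpha_k+1}{\alpha_k} = p_k \cdot \frac{k+3}{k+2}$, so that $p_k = 2 \cdot \frac{3}{2}\cdot \frac{4}{3}\cdots \frac{k+2}{k+1} = k+2$ by telescoping. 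Thus the diagonal spaces $B^n_n(\mathbb D)$, $n = 2,3,4,\ldots$, are exactly the ones visited, and Theorem 4.3 gives
\begin{equation*}
\left\Vert f\right\Vert_{B^{n+1}_{n+1}(\mathbb D)} \le \left\Vert f\right\Vert_{B^{n}_{n}(\mathbb D)} \qquad (n = 2,3,4,\ldots),
\end{equation*}
which, concatenated, is precisely the asserted chain $\left\Vert f\right\Vert_{B^2_2} \ge \left\Vert f\right\Vert_{B^3_3} \ge \left\Vert f\right\Vert_{B^4_4} \ge \cdots$. One should also note that membership propagates: if $f \in B^2_2(\mathbb D)$ then the finiteness of each successive norm is guaranteed by the previous inequality, so every term in the chain is well defined.

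There is no serious obstacle here, as the corollary is a formal consequence of the already-quoted theorem; the only thing to be careful about is the bookkeeping of exponents and the verification that $\alpha_0 < 2$ so that Theorem 4.3 applies at every node of the iteration (in particular at the very first step, from $B^2_2$ to $B^3_3$, where $\alpha = 2$). If one additionally wants the connection back to the upper half-plane formulation, one would match $B^2_2(\mathbb D) = A^2(\mathbb D)$ (up to the normalizing factor $\pi^{-1}$) and $B^{n}_{n}(\mathbb D)$ with the weight $(1-|z|^2)^{n-2}$ appearing in the disc conjecture, so that the inequality $\left\Vert f\right\Vert_{B^n_n} \le \left\Vert f\right\Vert_{B^2_2}$ unwinds to the disc conjecture at $s = n$; but for the statement as phrased, the iteration of Theorem 4.3 is all that is needed.
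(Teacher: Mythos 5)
Your derivation is correct. The paper itself provides no proof of this corollary --- it is quoted verbatim as Corollary 2 of \cite{BBHOP}, with only the remark that the authors of \cite{BBHOP} "deduce" it from their Theorem 1 (quoted here as Theorem 4.3). Your iteration supplies exactly that deduction: the bookkeeping $(p,\alpha)\mapsto\bigl(p(\alpha+1)/\alpha,\alpha+1\bigr)$ sends $(n,n)$ to $(n+1,n+1)$, the telescoping product confirms $p_k = k+2$, and the hypothesis $\alpha\ge\alpha_0 = \tfrac{1+\sqrt{17}}{4}\approx 1.28$ is satisfied at every node since $\alpha\ge 2$. Your closing remark that membership in the successive spaces propagates automatically (each norm being finite by the preceding inequality) is a useful point of care. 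In short, the proposal is the natural and presumably intended proof, and there is nothing in the paper to compare it against beyond the citation.
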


For the proof of Theorem 4.1, Bayart et al. solve a minimization problem in a Sobolev space $W^{1, 2} (\mathbb D).$

\subsection{A second test of the conjecture}
We must test out the Lieb-Solovej conjecture for the unit disc on the analytical monomials $z^n \quad (n=1, 2,\cdots).$
\vskip 2truemm
\noindent
{\textbf {Question.}} Let $s>1$ and $n=1, 2,\cdots$ Does the following estimate hold?
\begin{equation}\label{monomial}
\int_{\mathbb D} \left \vert z^n\right \vert^{2s}(1-|z|^2)^{2s-2}dm(z)\leq \frac {\pi^{1-s}}{2s-1}\left (\int_{\mathbb D}\left \vert z^n\right \vert^{2}dm(z) \right )^s.
\end{equation}
\vskip 2truemm
We move to the polar coordinates. The right hand side of (\ref{monomial}) is equal to
$$\frac {\pi^{1-s}}{2s-1}\left (2\pi \int_0^1 r^{2n+1}dr \right )^s=\frac {\pi^{1-s}}{2s-1}\left (\frac \pi{n+1} \right )^s=\frac \pi{(2s-1)(n+1)^s}.$$
The left hand side is equal to
$$L:=2\pi\int_0^1 r^{2ns}(1-r^2)^{2s-2}rdr.$$
We apply the change of variable $r^2=\rho, \hskip 2truemm 2rdr=d\rho.$ This gives
$$L=\pi\int_0^1 \rho^{ns}(1-\rho)^{2s-2}d\rho=\pi B(ns+1, 2s-1)=\pi \frac {\Gamma (ns+1)\Gamma (2s-1)}{\Gamma ((n+2)s)}.$$
Here, $B(\cdot, \cdot)$ denotes the Euler Beta function.

So the estimate (\ref{monomial}) is equivalent to the following inequality for the Gamma function:
\begin{equation}\label{monogam}
\frac {\Gamma (ns+1)\Gamma (2s)}{\Gamma ((n+2)s)}\leq \frac 1{(n+1)^s}\quad \quad (s>1, \hskip 1truemm n=1, 2,\cdots).
\end{equation}

It appears that the inequality (\ref{monogam}) (and hence the estimate (\ref{monomial})) is true.

\begin{thm}
The estimate $(\ref{monogam})$ is valid and strict for all $s>1$ and $n=1, 2,\cdots.$
\end{thm}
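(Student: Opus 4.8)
The plan is to prove the equivalent Gamma-function inequality
\[
\frac{\Gamma(ns+1)\,\Gamma(2s)}{\Gamma((n+2)s)}<\frac{1}{(n+1)^s}\qquad (s>1,\ n=1,2,\dots)
\]
by taking logarithms and studying, for each fixed integer $n$, the function
\[
\Psi_n(s):=\log\Gamma(ns+1)+\log\Gamma(2s)-\log\Gamma((n+2)s)+s\log(n+1)
\]
on $[1,\infty)$. At $s=1$ we have $\Gamma(n+1)\Gamma(2)=n!$ and $\Gamma(n+2)=(n+1)!$, so $\Gamma(n+1)\Gamma(2)/\Gamma(n+2)=1/(n+1)$, i.e. $\Psi_n(1)=0$; equality holds there. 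Thus it suffices to show $\Psi_n(s)<0$ for $s>1$, and for this I would show $\Psi_n$ is strictly decreasing on $[1,\infty)$, or at least that $\Psi_n'(1)<0$ together with strict concavity.

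First I would compute
\[
\Psi_n'(s)=n\psi(ns+1)+2\psi(2s)-(n+2)\psi((n+2)s)+\log(n+1),
\]
and evaluate at $s=1$: using $\psi(n+1)=\psi(1)+\sum_{k=1}^{n}\tfrac1k=-\gamma+H_n$ and the duplication-type value $\psi(2)=1-\gamma$, one gets $\Psi_n'(1)=nH_n+2(1-\gamma)+n\gamma-(n+2)\psi(n+2)+\log(n+1)$; after simplification this should reduce to a clean expression in $H_n$ and $\log(n+1)$ whose negativity follows from $H_{n+1}>\log(n+2)$ and elementary estimates. Second — and this is the structural heart of the argument — I would prove strict concavity of $\Psi_n$ by differentiating once more:
\[
\Psi_n''(s)=n^2\psi'(ns+1)+4\psi'(2s)-(n+2)^2\psi'((n+2)s).
\]
Here I would invoke the integral representation $\psi'(x)=\int_0^\infty \frac{t e^{-xt}}{1-e^{-t}}\,dt$ (already used in the proof of Theorem 3.2) to write $\Psi_n''(s)$ as a single integral $\int_0^\infty \Phi(t)\,e^{-\text{(stuff)}}\,dt$ and check the integrand is negative; the term $\psi'(ns+1)$ rather than $\psi'(ns)$ is slightly awkward, so I would either use $\psi'(x+1)=\psi'(x)-x^{-2}$ to convert it, or bound $n^2\psi'(ns+1)\le n^2\psi'(ns)$ and compare the resulting $\psi'(ns),\psi'(2s),\psi'((n+2)s)$ directly via the superadditivity/monotonicity properties of $t\psi'(t)$ established in Lemma 3.3. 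Indeed, the function $h(t)=t\psi'(t)$ being strictly decreasing (Lemma 3.3) gives $a^2\psi'(as)=\frac{a}{s}h(as)$ and lets one compare the three terms since $ns+2s=(n+2)s$; this is exactly the kind of "convexity of $\log\Gamma(us)-s\log\Gamma(u)$" phenomenon already exploited in Section 3.

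The main obstacle I anticipate is the "$+1$" shift in $\Gamma(ns+1)$, which breaks the exact scaling homogeneity that made the Theorem 3.2 argument so clean; handling it requires either an extra elementary estimate ($\psi'(ns+1)<\psi'(ns)$ costs us a term $-n^2/(ns)^2=-1/s^2$, which must be absorbed) or a more careful sign analysis of the combined integrand. A secondary nuisance is that unlike Theorem 3.2 there is no free parameter $r$ — the inequality must be proved for the \emph{discrete} family $n=1,2,\dots$ simultaneously with $s$ continuous, so the concavity/monotonicity estimate has to be uniform in $n$; I expect the integral-representation approach to deliver this uniformity automatically, whereas a term-by-term comparison would need an induction on $n$ as a fallback.
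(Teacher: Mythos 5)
Your reduction to the Gamma-function inequality $\Psi_n(1)=0$ and $\Psi_n(s)<0$ for $s>1$ matches the paper's set-up (the paper writes the same quantity as $\varphi(s)$, splitting $\log\Gamma(ns+1)=\log n+\log s+\log\Gamma(ns)$). Your computation of $\Psi_n'(1)$ is sound; it simplifies to $\Psi_n'(1)=-2H_n+1-\tfrac1{n+1}+\log(n+1)$, which is indeed negative. The difficulty is in your second step.

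\textbf{The concavity step has a genuine gap, and one of your proposed work-arounds fails.} You suggest bounding $n^2\psi'(ns+1)\le n^2\psi'(ns)$ and then comparing $n^2\psi'(ns)+4\psi'(2s)-(n+2)^2\psi'((n+2)s)$ using Lemma 3.3. But Lemma 3.3 makes this expression \emph{positive}, not negative: writing $a^2\psi'(as)=\tfrac{a}{s}\,h(as)$ with $h(t)=t\psi'(t)$ strictly decreasing, and noting $(n+2)s>\max(ns,2s)$, one gets $nh(ns)+2h(2s)>(n+2)h((n+2)s)$, so that bound is $>0$ and useless. (Numerically, at $n=1,s=1$: $\psi'(1)+4\psi'(2)-9\psi'(3)\approx 0.67>0$.) The correct identity $\psi'(ns+1)=\psi'(ns)-(ns)^{-2}$ gives
$$\Psi_n''(s)=n^2\psi'(ns)+4\psi'(2s)-(n+2)^2\psi'((n+2)s)-\frac1{s^2},$$
so what you actually need is the \emph{quantitative} estimate
$$n^2\psi'(ns)+4\psi'(2s)-(n+2)^2\psi'((n+2)s)<\frac1{s^2}\qquad(s>1,\ n\ge1),$$
i.e.\ $a^2\psi'(a)+b^2\psi'(b)-(a+b)^2\psi'(a+b)<1$ for $a=ns$, $b=2s$. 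The natural termwise attack via $\psi'(x)=\sum_{k\ge0}(x+k)^{-2}$ does not close: the $k=0$ term gives exactly $1$, but the $k\ge1$ terms are \emph{not} all of one sign (e.g.\ with $a=b$, the $k=1$ term is positive). So the concavity is numerically plausible but neither established by your outline nor easily salvageable by the tools you cite.

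The paper avoids concavity entirely. It shows $\varphi'(s)<0$ for all $s>1$ directly: plugging the integral representation $\psi(x)=-\tfrac1x+\log x-\int_0^\infty(\tfrac1{e^t-1}-\tfrac1t)e^{-xt}\,dt$ into $\varphi'(s)$ makes the $\log s$ contributions cancel, and after an integration by parts (picking up a boundary term $\tfrac1{2s}$) the remaining integral has a pointwise nonnegative integrand. This leaves the $s$-free inequality $n\log n+2\log2-(n+2)\log(n+2)+\log(n+1)+\tfrac12<0$, which is disposed of by elementary calculus on $y\mapsto y\log y+2\log2-(y+2)\log(y+2)+\log(y+1)+\tfrac12$. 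That route sidesteps both the $+1$ shift and the need for any uniform-in-$n$ concavity statement. If you wish to rescue your proposal, the path of least resistance is to abandon $\Psi_n''$ and instead bound $\Psi_n'(s)$ directly in the same way.
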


\begin{proof}
\noindent
Taking the logarithm of both sides of the inequality (\ref{monogam}), we wish to prove that given $n=1, 2,\cdots,$ we have
$$\varphi (s):=\log n +\log s+\log \Gamma (ns)+\log \Gamma (2s)-\log \Gamma ((n+2)s)+s\log (n+1)
<   0 $$
for every $s>1.$ Since $\varphi (1)=0,$ it suffices to show that the derivative $\varphi'(s)$ of the function $\varphi (s)$ is negative. We have
$$\varphi' (s)=\frac 1s+\frac {n\Gamma' (ns)}{\Gamma (ns)}+\frac {2\Gamma' (2s)}{\Gamma (2s)}-\frac {(n+2)\Gamma' ((n+2)s)}{\Gamma ((n+2)s)}+\log (n+1).$$
We recall the following well-known identity (cf. e.g. \cite{AS}):
$$\frac {\Gamma' (x)}{\Gamma (x)}=-\frac 1x +\log x -\int_0^\infty \left (\frac 1{e^t-1}-\frac 1t \right )e^{-xt}dt \quad \quad (x>0).$$
So
$$
\begin{array}{clcr}
\varphi'(s)&=\frac 1s+n\left \{-\frac 1{ns} +\log (ns) -\int_0^\infty \left (\frac 1{e^t-1}-\frac 1t \right )e^{-nst}dt     \right \}\\
&+2\left \{-\frac 1{2s} +\log (2s) -\int_0^\infty \left (\frac 1{e^t-1}-\frac 1t \right )e^{-2st}dt     \right \}\\
&-(n+2)\left \{-\frac 1{(n+2)s} +\log ((n+2)s) -\int_0^\infty \left (\frac 1{e^t-1}-\frac 1t \right )e^{-(n+2)st}dt     \right \}\\
&+\log (n+1)\\
&=n\log (ns)+2\log (2s) -(n+2)\log ((n+2)s)+\log (n+1)\\
&-\int_0^\infty \left (\frac 1{e^t-1}-\frac 1t \right )\left \{ne^{-nst}+2e^{-2st}-(n+2)e^{-(n+2)st}\right \}dt\\
&=n\log n+2\log 2 -(n+2)\log (n+2)+\log (n+1)\\
&+\frac 1s\int_0^\infty \left (\frac 1{e^t-1}-\frac 1t \right )\left \{\frac d{dt} \left [e^{-nst}+e^{-2st}-e^{-(n+2)st}\right ]\right \}dt.
\end{array}
$$
An integration by parts gives
$$\varphi'(s)=n\log n+2\log 2 -(n+2)\log (n+2)+\log (n+1)$$
$$+\frac 1{2s}-\frac 1s \int_0^\infty \left \{\frac d{dt} \left (\frac 1{e^t-1}-\frac 1t \right )\right \}\left [e^{-nst}+e^{-2st}-e^{-(n+2)st}\right ]dt.$$
We next show that $\varphi' (s)<  0 \quad (s>1 \hskip 1truemm {\rm {and}} \hskip 1truemm n=1, 2,\cdots).$ It is easy to check that $e^{-nst}+e^{-2st}-e^{-(n+2)st}>0$ and $\frac d{dt} \left (\frac 1{e^t-1}-\frac 1t \right )\geq 0.$ It suffices to prove that
\begin{equation}\label{n}
n\log n+2\log 2 -(n+2)\log (n+2)+\log (n+1)+\frac 12< 0 \quad \quad (n=1, 2,\cdots).
\end{equation}
To this aim, we consider the function
$$h (y):=y\log y +2\log 2-(y+2)\log (y+2)+\log (y+1)+\frac 12 \quad \quad (y\geq 1).$$
Its first order derivative is equal to
$$h' (y)=\log y-\log (y+2)+\frac 1{y+1}$$
and its second order derivative is equal to
$$h'' (y)=\frac 1y-\frac 1{y+2}-\frac 1{(y+1)^2}=\frac {y^2+2y+2}{y(y+2)(y+1)^2}.$$
Clearly, $h'' (y)>0$ for every $y\geq 1,$ so that $h'(y)$ increases from $h'(1)=-\log 3 +\frac 12$ to $\lim \limits_{y\rightarrow \infty} h'(y)=0.$ In particular, $h'(y)<  0$ for every $y\geq 1.$ We conclude that $h(y)<  h(1)=3\log 2-3\log 3+\frac 12< 0.$ The estimate (\ref{n}) follows. The proof of the theorem is complete.
\end{proof}

\subsection{Consequences of the conjecture for $s$ close to 1 on the unit disc.}
Via the same transfer principle, the conjecture (\ref{step}) for $s$ close to 1 on the upper half-plane $\Pi^+$ takes the following form on the unit disc $\mathbb D:$
\begin{equation}\label{step1}
\int_{\mathbb D} \log \left [\frac 1{\sqrt \pi|G(z)|(1-|z|^2)}  \right ]|G(z)|^{2}dm (z)\geq 1
\end{equation}
for every $G\in A^2 (\mathbb D)$ such that $\left \Vert F\right \Vert_{A^2 (\mathbb D)}=1.$

Theorem 3.6 implies the following result on the unit disc.

\begin{cor}
For every $G\in A^2 (\mathbb D)$ such that $\left \Vert G\right \Vert_{A^2 (\mathbb D)}=1,$ we have
$$\int_{\mathbb D} \log \left [\frac 1{\sqrt \pi|G(z)|(1-|z|^2)}  \right ]|G(z)|^{2}dm (z)\geq \frac {\log 3}2.$$
\end{cor}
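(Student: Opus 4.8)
The plan is to deduce this corollary directly from Theorem 3.6 via the transfer principle already set up in Section 4. The key point is that the two inequalities --- the one in Theorem 3.6 on $\Pi^+$ and the one asserted here on $\mathbb D$ --- are literally the same statement written in two coordinate systems, so no new analytic work is needed once the change of variables is carried out carefully.

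First I would take $G\in A^2(\mathbb D)$ with $\|G\|_{A^2(\mathbb D)}=1$ and set $F=(G\circ\Phi^{-1})\cdot(\Phi^{-1})'$, where $\Phi(w)=i\frac{1+w}{1-w}$ is the biholomorphism from $\mathbb D$ to $\Pi^+$ used in Subsection 4.1; equivalently $G=(F\circ\Phi)\Phi'$. As recorded in the proof of the disc form of the conjecture, $y=\Im\Phi(w)=\frac{1-|w|^2}{|1-w|^2}$ and $|\Phi'(w)|=\frac{2}{|1-w|^2}$, whence
\begin{equation*}
|F(\Phi(w))|\,y=|F(\Phi(w))|\,\frac{1-|w|^2}{|1-w|^2}=\tfrac12\,|F(\Phi(w))\Phi'(w)|\,(1-|w|^2)=\tfrac12\,|G(w)|(1-|w|^2).
\end{equation*}
Also the $L^2$ change of variables gives $\|F\|_{A^2(\Pi^+)}^2=\int_{\mathbb D}|F(\Phi(w))\Phi'(w)|^2\,dm(w)=\|G\|_{A^2(\mathbb D)}^2=1$, so $F$ is admissible for Theorem 3.6, and the probability measure $|F(x+iy)|^2\,dx\,dy$ pulls back to $|G(w)|^2\,dm(w)$.

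Next I would substitute into the integral from Theorem 3.6. Using the displayed identity $2\sqrt\pi\,|F(\Phi(w))|\,y=\sqrt\pi\,|G(w)|(1-|w|^2)$, the integrand transforms as
\begin{equation*}
\log\!\left[\frac{1}{2\sqrt\pi\,|F(x+iy)|\,y}\right]|F(x+iy)|^2\,dx\,dy
=\log\!\left[\frac{1}{\sqrt\pi\,|G(w)|(1-|w|^2)}\right]|G(w)|^2\,dm(w),
\end{equation*}
so $\int_{\Pi^+}\log\big[\tfrac1{2\sqrt\pi|F|y}\big]|F|^2\,dx\,dy=\int_{\mathbb D}\log\big[\tfrac1{\sqrt\pi|G|(1-|w|^2)}\big]|G|^2\,dm$. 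Theorem 3.6 says the left-hand side is $\ge \tfrac{\log 3}{2}$, which is exactly the assertion of the corollary. I would remark that the same computation with $2\sqrt\pi$ replaced throughout by the appropriate constant shows (\ref{step}) on $\Pi^+$ and (\ref{step1}) on $\mathbb D$ are equivalent, which is why the statement (\ref{step1}) appears with the constant $\sqrt\pi$ rather than $2\sqrt\pi$.

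There is essentially no obstacle here --- the content is entirely in Theorem 3.6, and the corollary is a bookkeeping exercise. The one place to be slightly careful is tracking the factor of $2$: the Jacobian weight $|\Phi'(w)|^{2s}$ in the Bergman-norm transfer produces the $2^{2s-2}$ in the constant $C_s$ on $\Pi^+$ but no power of $2$ in the disc constant $\frac{\pi^{1-s}}{2s-1}$, and correspondingly the pointwise constant degrades from $\frac{1}{2\sqrt\pi}$ (Corollary 2.3 on $\Pi^+$) to $\frac{1}{\sqrt\pi}$ on $\mathbb D$; making sure the logarithm's argument carries exactly this $\sqrt\pi$ is the only thing that needs checking.
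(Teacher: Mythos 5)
Your proof is correct and is exactly the argument the paper has in mind: the paper simply states that Theorem 3.6 implies the corollary via the transfer principle of Subsection 4.1, and you have carried out that change of variables explicitly, correctly tracking the factor of $2$ that turns $2\sqrt\pi\,|F|\,y$ into $\sqrt\pi\,|G|(1-|w|^2)$.
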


\section{The obtained bounds}
\subsection{A preliminary bound}
The following corollary is a direct consequence of Corollary 2.3.
\begin{cor}
For every $F\in A^2 (\Pi^+),$ we have
$$\int_{\Pi^+} |F(x+iy)|^{2s}y^{2s-2}dxdy\leq \frac {\pi^{1-s}}{2^{2s-2}}\left (\int_{\Pi^+} |F(x+iy)|^2dxdy \right )^s.$$
\end{cor}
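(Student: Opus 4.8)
The plan is to derive Corollary 5.1 from the sharp pointwise estimate in Corollary \ref{cor:pointwiseimproved} by a one-step interpolation of the integrand. If $F\equiv 0$ the inequality is trivial, so assume $F\in A^2(\Pi^+)$ with $0<\|F\|_{A^2(\Pi^+)}<\infty$. Since $s\ge 1$, the exponent $2s-2$ is nonnegative, so it makes sense to split the integrand pointwise as
$$|F(x+iy)|^{2s}y^{2s-2}=\bigl(|F(x+iy)|\,y\bigr)^{2s-2}\,|F(x+iy)|^{2},$$
and to bound the first factor uniformly in $(x,y)$.

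First I would invoke Corollary \ref{cor:pointwiseimproved}, which gives $|F(x+iy)|\,y\le \frac{1}{2\sqrt\pi}\|F\|_{A^2(\Pi^+)}$ for every $x+iy\in\Pi^+$. Raising this to the power $2s-2\ge 0$ yields
$$\bigl(|F(x+iy)|\,y\bigr)^{2s-2}\le\left(\frac{1}{2\sqrt\pi}\right)^{2s-2}\|F\|_{A^2(\Pi^+)}^{2s-2}.$$
Substituting this bound into the integral and pulling out the constant leaves exactly $\int_{\Pi^+}|F(x+iy)|^{2}\,dxdy=\|F\|_{A^2(\Pi^+)}^{2}$, so that
$$\int_{\Pi^+}|F(x+iy)|^{2s}y^{2s-2}\,dxdy\le\left(\frac{1}{2\sqrt\pi}\right)^{2s-2}\|F\|_{A^2(\Pi^+)}^{2s}.$$
It then remains only to simplify the constant: since $(2\sqrt\pi)^{2s-2}=(4\pi)^{s-1}=2^{2s-2}\pi^{s-1}$, we have $\left(\tfrac{1}{2\sqrt\pi}\right)^{2s-2}=\frac{\pi^{1-s}}{2^{2s-2}}$, which is precisely the claimed bound.

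There is essentially no hard step here; the only points requiring a moment's care are that $2s-2\ge 0$ (needed both to apply the monotone $x\mapsto x^{2s-2}$ to the pointwise estimate and to ensure the splitting is legitimate) and that $\|F\|_{A^2(\Pi^+)}$ is finite so no indeterminate expressions arise. It is worth remarking that the constant $\frac{\pi^{1-s}}{2^{2s-2}}$ obtained this way exceeds the conjectural constant $C_s=\frac{\pi^{1-s}}{(2s-1)2^{2s-2}}$ by the factor $2s-1>1$, so this argument recovers the Lieb--Solovej embedding with the correct powers of $\pi$ and $2$ but misses the sharp constant; closing that gap is exactly the content of the conjecture and is not addressed by this elementary estimate.
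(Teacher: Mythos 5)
Your proof is correct and is exactly the route the paper has in mind: the paper states Corollary 5.1 as a ``direct consequence of Corollary 2.3'' without writing out details, and your argument (factor out $(|F|y)^{2s-2}$, bound it by the sup estimate, and integrate the remaining $|F|^2$) is precisely that intended one-line deduction.
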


\subsection{An unsuccessful attempty via Minkowski's integral inequality}
A less successful attempt is via Minkowski's integral inequality. We obtain the following result.
\begin{prop}
Given $s>1$ and $F\in A^2 (\Pi^+),$ for every $\nu >1,$ we have
$$\int_{\Pi^+} |F(x+iy)|^{2s}y^{2s-2}dxdy$$
$$\leq C_s\frac {\nu^s \Gamma (2s)\Gamma ((\nu-1)s)}{\left (\Gamma \left (\frac {(\nu+1)s}2 \right )\right )^2}\left (\int_{\Pi^+} |F(x+iy)|^2dxdy \right )^s.$$ 
In particular, for $\nu=3,$ we obtain
$$\int_{\Pi^+} |F(x+iy)|^{2s}y^{2s-2}dxdy\leq 3^s C_s\left (\int_{\Pi^+} |F(x+iy)|^2dxdy \right )^s.$$ 
\end{prop}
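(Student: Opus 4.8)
The plan is to get around the non-integrality of $s$ by working with $F^2$, which is holomorphic even though $F^s$ is not, and to combine the reproducing formula for $F^2$ with Minkowski's integral inequality so that the right-hand side stays proportional to $\|F\|_{A^2(\Pi^+)}^2$. Fix first an $F\in A^2(\Pi^+)$ for which $F^2\in A^2(\Pi^+)$ as well (the general case is postponed to the end). Writing the left side of the asserted inequality as $\int_{\Pi^+}|F^2(z)|^{s}y^{2s-2}dxdy$ with $z=x+iy$, I would apply the reproducing property (\ref{repro}), together with (\ref{berg}), to the holomorphic function $F^2$, obtaining the pointwise bound
$$|F^2(z)|\le \frac{2^{\nu-1}\nu}{\pi}\int_{\Pi^+}\frac{|F(u+iv)|^2\,v^{\nu-1}}{|z-\bar w|^{\nu+1}}\,du\,dv,\qquad w=u+iv .$$

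Next I would take the $L^{s}(\Pi^+,\,y^{2s-2}dxdy)$ norm of both sides. Since $s>1$, Minkowski's integral inequality moves that norm inside the $w$-integral, giving
$$\Big(\int_{\Pi^+}|F(z)|^{2s}y^{2s-2}dxdy\Big)^{1/s}\le \frac{2^{\nu-1}\nu}{\pi}\int_{\Pi^+}|F(u+iv)|^2 v^{\nu-1}\Big(\int_{\Pi^+}\frac{y^{2s-2}dxdy}{|z-\bar w|^{(\nu+1)s}}\Big)^{1/s}du\,dv .$$
The inner integral is computed by Proposition \ref{prop} (with the two variables interchanged, which is legitimate because $|z-\bar w|=|w-\bar z|$) with $t=2s-2$ and $2r=(\nu+1)s$; note that the standing hypothesis $2r-t>2$ there reads exactly $(\nu-1)s>0$, which is why $\nu>1$ is imposed, and it equals $\frac{4\pi\,\Gamma(2s-1)\,\Gamma((\nu-1)s)}{2^{(\nu+1)s}(\Gamma(\frac{(\nu+1)s}{2}))^{2}}\,v^{-(\nu-1)s}$. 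The decisive point is the arithmetic of the exponents of $v$: the $1/s$-th power of $v^{-(\nu-1)s}$ is $v^{-(\nu-1)}$, which cancels precisely the $v^{\nu-1}$ produced by the weight in the reproducing formula, so the $w$-integral collapses to $\int_{\Pi^+}|F(u+iv)|^2du\,dv=\|F\|_{A^2(\Pi^+)}^{2}$. Raising the resulting inequality to the power $s$, substituting $\frac{2^{\nu-1}\nu}{\pi}$ for the kernel constant and simplifying with $\Gamma(2s-1)=\Gamma(2s)/(2s-1)$, the total constant becomes exactly $C_s\cdot\frac{\nu^s\Gamma(2s)\Gamma((\nu-1)s)}{(\Gamma(\frac{(\nu+1)s}{2}))^{2}}$; for $\nu=3$ one has $2s=(\nu-1)s$ and $\frac{(\nu+1)s}{2}=2s$, so the $\Gamma$-ratio equals $1$ and the constant is $3^sC_s$.

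For a general $F\in A^2(\Pi^+)$ I would apply the above to the vertical translates $F_\varepsilon(z):=F(z+i\varepsilon)$. These do lie in the admissible class, since $F_\varepsilon^2\in A^2(\Pi^+)$: indeed, by the pointwise estimate of Corollary \ref{cor:pointwiseimproved}, $\int_{\Pi^+}|F_\varepsilon|^4dxdy=\int_{\{\Im z>\varepsilon\}}|F|^4dxdy\le \frac{\|F\|_{A^2}^2}{4\pi\varepsilon^2}\int_{\{\Im z>\varepsilon\}}|F|^2dxdy<\infty$. One then lets $\varepsilon\downarrow0$, using $\|F_\varepsilon\|_{A^2}\le\|F\|_{A^2}$, $F_\varepsilon\to F$ pointwise on $\Pi^+$, and Fatou's lemma applied to the left-hand side.

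The step I expect to require genuine care is exactly this last one --- justifying that (\ref{repro}) may be used with $F^2$ in place of $F$ for every $F\in A^2(\Pi^+)$ --- since $F^2$ need not itself belong to $A^2(\Pi^+)$; everything else is the bookkeeping indicated above. It should also be emphasized that this route cannot attain the conjectural constant $C_s$: by log-convexity of $\log\Gamma$ one has $(\Gamma(\frac{(\nu+1)s}{2}))^{2}\le\Gamma(2s)\Gamma((\nu-1)s)$, so the extra factor $\frac{\nu^s\Gamma(2s)\Gamma((\nu-1)s)}{(\Gamma(\frac{(\nu+1)s}{2}))^{2}}$ is strictly larger than $1$ for every admissible $\nu>1$; in particular, for $\nu=3$ the bound $3^sC_s\|F\|_{A^2}^{2s}$ is even worse than the preliminary bound $\frac{\pi^{1-s}}{2^{2s-2}}\|F\|_{A^2}^{2s}=(2s-1)C_s\|F\|_{A^2}^{2s}$ of Corollary 5.1 --- which is precisely why this attempt via Minkowski's integral inequality is only "less successful."
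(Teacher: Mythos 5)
Your argument is essentially identical to the paper's proof: the reproducing formula (\ref{repro}) applied to $F^2$ with parameter $\nu$, Minkowski's integral inequality in $L^s(y^{2s-2}dxdy)$, Proposition \ref{prop} with $t=2s-2$ and $2r=(\nu+1)s$ so that the resulting $v^{-(\nu-1)}$ cancels the weight $v^{\nu-1}$, and the same constant bookkeeping via $\Gamma(2s-1)=\Gamma(2s)/(2s-1)$. The only difference is your regularization by vertical translates to justify reproducing $F^2$ when $F^2\notin A^2(\Pi^+)$ --- a point the paper passes over in silence --- and your closing remark correctly explains, consistently with the paper's Remark 5.3 and Proposition 5.4, why this route cannot reach $C_s$.
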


\begin{proof}
For every $\nu >1,$ we again rely on the reproducing property of the weighted Bergman kernel, we have
$$\left [F(x+iy)\right ]^2=\frac {2^{\nu-1}\nu}{\pi}\int_{\Pi^+} \frac {v^{\nu-1}}{(x-u+i(y+v))^{\nu+1}}\left [F(u+iv)\right ]^2dudv.$$
When we apply Minkowski's integral inequality and Proposition 2.1, we obtain
$$\left (\int_{\Pi^+} |F(x+iy)|^{2s}y^{2s-2}dxdy\right )^{\frac 1s}$$
$$\leq \frac {2^{\nu-1}\nu}{\pi}\left (\int_{\Pi^+}\left (\int_{\Pi^+} \frac {v^{\nu-1}}{|x-u+i(y+v)|^{\nu+1}}\left \vert F(u+iv)\right \vert^2dudv\right )^s y^{2s-2}dxdy\right )^{\frac 1s}$$
$$\leq \frac {2^{\nu-1}\nu}{\pi}\int_{\Pi^+}\left (\int_{\Pi^+} \left [\frac {v^{\nu-1}}{|x-u+i(y+v)|^{\nu+1}}\left \vert F(u+iv)\right ]^2 \right ]^s y^{2s-2}dxdy\right )^{\frac 1s}dudv$$
$$= \frac {2^{\nu-1}\nu}{\pi}\int_{\Pi^+}\left (\int_{\Pi^+} \frac {y^{2s-2}}{|x-u+i(y+v)|^{(\nu+1)s}} dxdy\right )^{\frac 1s}\left \vert F(u+iv)\right \vert^2 v^{\nu-1}dudv$$
$$= \frac {2^{\nu-1}\nu}{\pi}\int_{\Pi^+}\left (\frac {4\pi\Gamma (2s-1)\Gamma ((\nu-1)s)}{2^{(\nu+1)s}\left (\Gamma \left (\frac {(\nu+1)s}2 \right )  \right )^2v^{(\nu-1)s}} \right )^{\frac 1s}\left \vert F(u+iv)\right \vert^2 v^{\nu-1}dudv$$
$$= \frac {\nu}{4\pi}\left (\frac {4\pi\Gamma (2s-1)\Gamma ((\nu-1)s)}{\left (\Gamma \left (\frac {(\nu+1)s}2 \right )  \right )^2} \right )^{\frac 1s}\int_{\Pi^+}\left \vert F(u+iv)\right \vert^2 dudv.$$
The result follows easily.
\end{proof}

\begin{rem}
Comparing the bounds of Corollary 5.1 and Proposition 5.2, namely $2s-1$ and $\inf \limits_{\nu >1} \frac {\nu^s \Gamma (2s)\Gamma ((\nu-1)s)}{\left (\Gamma \left (\frac {(\nu+1)s}2 \right )\right )^2}.$ It looks surprising that the first bound may be smaller than or equal to the second at least for $1<s<2.$ This is implied by the following proposition.
\end{rem}

\begin{prop}
For all $1<s<2,$ the following inequality holds
$$2s-1 \leq \inf \limits_{\nu >1} \frac {\nu^s \Gamma (2s)\Gamma ((\nu-1)s)}{\left (\Gamma \left (\frac {(\nu+1)s}2 \right )\right )^2}.$$
\end{prop}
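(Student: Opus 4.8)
The plan is to reduce the claimed inequality to a statement about the function $g(\nu):=\frac{\nu^s\,\Gamma(2s)\,\Gamma((\nu-1)s)}{\left(\Gamma\left(\frac{(\nu+1)s}{2}\right)\right)^2}$ on $(1,\infty)$ and to locate where its infimum is attained. First I would examine the behavior at the endpoints: as $\nu\to 1^+$ the factor $\Gamma((\nu-1)s)\sim \frac{1}{(\nu-1)s}\to\infty$, so $g(\nu)\to\infty$; as $\nu\to\infty$, using $\log\Gamma(x)\sim x\log x - x$ one checks that $\log g(\nu)$ grows like $\tfrac{\nu s}{2}\log\nu\to\infty$ (the two $\Gamma$'s in the denominator each contribute, beating the single $\Gamma((\nu-1)s)$ in the numerator). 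Hence the infimum over $\nu>1$ is attained at an interior critical point, and it suffices to bound $g$ from below there.

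The natural candidate for the minimizer is $\nu=3$, which is exactly the value singled out in Proposition~5.2 and which makes the argument of the numerator Gamma equal to $2s$ and the denominator Gammas equal to $\Gamma(2s)$, giving $g(3)=3^s$. So the concrete target becomes the clean inequality
\begin{equation*}
2s-1\le 3^s\qquad(1<s<2),
\end{equation*}
together with the claim that $\nu=3$ is indeed (at least) a point where $g$ is no larger than its infimum — equivalently, it is enough to show $g(\nu)\ge 2s-1$ for \emph{all} $\nu>1$, and the easiest route is to prove $g(\nu)\ge g(3)=3^s\ge 2s-1$. The elementary inequality $3^s\ge 2s-1$ on $(1,2)$ is immediate: at $s=1$ both sides equal $2$, and the derivative comparison $3^s\log 3\ge 2$ holds throughout since $3^s\log 3>3\log 3>2$ for $s>1$ (indeed $\log 3\approx 1.0986$ and $3\log 3\approx 3.3$); one could alternatively note convexity of $3^s$ versus linearity of $2s-1$ and check the two endpoints $s=1$ and $s=2$ ($3^2=9>3=2\cdot2-1$).

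The main obstacle is therefore not the arithmetic $3^s\ge 2s-1$ but establishing $g(\nu)\ge 3^s$ for every $\nu>1$, i.e. that $\nu=3$ is a global minimizer of $g$ on $(1,\infty)$. The plan here is to take logarithms, set $G(\nu):=\log g(\nu)=\log(\Gamma(2s))+s\log\nu+\log\Gamma((\nu-1)s)-2\log\Gamma\left(\tfrac{(\nu+1)s}{2}\right)$, and compute $G'(\nu)=\frac{s}{\nu}+s\,\psi((\nu-1)s)-s\,\psi\left(\tfrac{(\nu+1)s}{2}\right)$, where $\psi=(\log\Gamma)'$. One checks $G'(3)=\frac{s}{3}+s\psi(2s)-s\psi(2s)=\frac{s}{3}>0$, so $\nu=3$ is \emph{not} the critical point and a bit more care is needed: I would instead show $G$ is convex in $\nu$ (using that $\psi'$ is positive and decreasing, so $\psi'((\nu-1)s)>\psi'\left(\tfrac{(\nu+1)s}{2}\right)$ precisely when $(\nu-1)s<\tfrac{(\nu+1)s}{2}$, i.e. $\nu<3$, giving $G''(\nu)=-\frac{s}{\nu^2}+s^2\psi'((\nu-1)s)-\tfrac{s^2}{2}\psi'\left(\tfrac{(\nu+1)s}{2}\right)$; controlling the sign of $G''$ via the Lemma that $t\mapsto t\psi'(t)$ is decreasing, proved earlier in the paper, is the delicate computation). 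Once $G$ is shown to be convex with a unique interior minimum $\nu_0$, it remains to verify $G(\nu_0)\ge s\log 3$; since locating $\nu_0$ explicitly is awkward, the cleanest finish is to combine convexity with the value and slope data at $\nu=3$ — from $G(3)=s\log 3$ and $G'(3)=\frac{s}{3}>0$ and convexity one gets $\nu_0<3$, and then a further monotonicity or second-endpoint estimate (e.g. comparing with the value at a conveniently chosen $\nu<3$) pins down $G(\nu_0)\ge s\log 3$. This last step, extracting the numerical lower bound from the convexity picture and the Gamma-function asymptotics, is where the real work lies; everything else is bookkeeping with $\psi$, $\psi'$ and the already-established monotonicity of $t\psi'(t)$.
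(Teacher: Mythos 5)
Your reduction collapses at its key step. You propose to prove $g(\nu)\ge g(3)=3^s$ for all $\nu>1$ and then finish with the elementary bound $3^s\ge 2s-1$. But, as you yourself compute, $G'(3)=\frac{s}{3}+s\psi(2s)-s\psi(2s)=\frac{s}{3}>0$, so $G=\log g$ is strictly increasing at $\nu=3$; hence for $\nu$ slightly below $3$ one already has $g(\nu)<3^s$, and the inequality $g(\nu)\ge 3^s$ is simply false. The fallback you then state --- locate the interior minimizer $\nu_0$ and ``verify $G(\nu_0)\ge s\log 3$'' --- is not merely delicate but impossible: by definition $G(\nu_0)=\min G\le G(3)=s\log 3$, and the inequality is strict precisely because $G'(3)\ne 0$ forces $\nu_0\ne 3$. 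So the chain $g(\nu)\ge 3^s\ge 2s-1$ cannot be repaired; what is actually needed is the weaker bound $G(\nu_0)\ge\log(2s-1)$, and nothing in your outline produces a lower bound for $G$ at its minimum other than the (false) comparison with $G(3)$. The endpoint analysis and the convexity of $G$, even if fully established, only show that an interior minimum exists --- they say nothing about where its value sits relative to $2s-1$.

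For comparison, the paper never locates the minimizer. It first observes that, since $\frac{(\nu+1)s}{2}$ is the midpoint of $2s$ and $(\nu-1)s$, log-convexity of $\Gamma$ gives $\frac{\Gamma(2s)\Gamma((\nu-1)s)}{\left(\Gamma\left(\frac{(\nu+1)s}{2}\right)\right)^2}\ge 1$, which settles the range $\nu\ge(2s-1)^{1/s}$ where $\nu^s\ge 2s-1$. On the remaining range $1<\nu\le(2s-1)^{1/s}$ it sharpens the lower bound on the Gamma ratio to $\frac{2}{\nu^2-1}$ via the Weierstrass product (each factor of the resulting infinite product is $\ge 1$), reducing the claim to the elementary inequality $(2s-1)(\nu^2-1)\le 2\nu^s$ there, which is then checked by a direct calculus argument using $1<s<2$. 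If you wish to pursue your route, you would need an entirely different device for bounding $g$ from below near its minimum; the value $g(3)=3^s$ cannot serve that purpose.
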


\begin{proof}
In view of the convexity of the function $\log \Gamma,$ we have 
$$\frac {\Gamma (2s)\Gamma ((\nu-1)s)}{\left (\Gamma \left (\frac {(\nu+1)s}2 \right )\right )^2}\geq 1\quad \quad (s>1).$$
 Moreover, the inequality $2s-1\leq \nu^s$ holds if and only if $\nu \geq (2s-1)^{\frac 1s}.$ In this case, we have
$$2s-1\leq  \frac {\nu^s \Gamma (2s)\Gamma ((\nu-1)s)}{\left (\Gamma \left (\frac {(\nu+1)s}2 \right )\right )^2}.$$
Let us now suppose that $1<\nu\leq (2s-1)^{\frac 1s}.$ We recall the following identity (cf. e.g. \cite{AS, GR}:
\begin{equation*}
\Gamma (x)=e^{-\gamma x}\frac 1x \prod_{k=1}^\infty \frac {e^{\frac xk}}{1+\frac xk}.
\end{equation*}
This implies that
\begin{equation*}
\frac {\Gamma (2s)\Gamma ((\nu-1)s)}{\left (\Gamma \left (\frac {(\nu+1)s}2 \right )\right )^2}=\frac 2{\nu^2-1}\prod_{k=1}^\infty \frac {\left (1+\frac {(\nu+1)s}{2k} \right )^2}{\left (1+\frac {2s}k \right )\left (1+\frac {(\nu-1)s}k \right )}.
\end{equation*}
We check easily that for all $\nu, s>1$ and $k=1, 2, \cdots,$ we have
$$\frac {\left (1+\frac {(\nu+1)s}{2k} \right )^2}{\left (1+\frac {2s}k \right )\left (1+\frac {(\nu-1)s}k \right )}\geq 1$$ and hence
\begin{equation*}
\prod_{k=1}^\infty \frac {\left (1+\frac {(\nu+1)s}{2k} \right )^2}{\left (1+\frac {2s}k \right )\left (1+\frac {(\nu-1)s}k \right )}\geq 1.
\end{equation*}
It then suffices to show the inequality $2s-1\leq \frac {2\nu^s}{\nu^2-1}$ for all $1<\nu\leq (2s-1)^{\frac 1s}.$ We study the function
$$\varphi (\nu)=(2s-1)(\nu^2-1)-2\nu^s.$$
Then $\varphi' (\nu)=2(2s-1)\nu-2s\nu^{s-1}$ and $\varphi'' (\nu)=2(2s-1)-2s(s-1)\nu^{s-2}.$ The following equivalence holds
$$\varphi''(\nu)=0 \Leftrightarrow \nu^{s-2}=\frac {2s-1}{s(2s-1)}.$$
\noindent
We show easily that $\frac {2s-1}{s(2s-1)}>1$ if and only if $1<s<\frac {3+\sqrt 5}2.$\\
Assuming that $1<s<2,$ since $\nu =\left (\frac {2s-1}{s(s-1)} \right )^{\frac 1{s-2}}<1$ and $\lim \limits_{\nu \rightarrow \infty} \varphi''(\nu)=2(2s-1)>0,$ we obtain that $\varphi''(\nu)>0$ for all $\nu>1.$ So $\varphi'(s)$ increases from the positive value $2s-2$ on the interval $[1, \infty);$ in particular, $\varphi'(s)>0$ for all $1<\nu<\infty.$ Finally, we conclude that the function $\varphi (\nu)$ increases from the value $-2$ to $\infty$ on the interval $[1, \infty).$ We have
$$\varphi \left ((2s-1)^{\frac 1s}\right )=(2s-1)^{\frac 2s+1}-3(2s-1)<0$$
provided that $1<s<2$ (study the function $\psi (s)=\log (2s-1)-\frac s2\log 3).$ This proves that for all $1<\nu<(2s-1)^{\frac 1s},$ we have $\varphi (\nu)<0$ or equivalently $2s-1\leq \frac {2\nu^s}{\nu^2-1}.$ This completes the proof of the proposition.
\end{proof}

\subsection{An improved bound via the complex interpolation method}
A classical example of  interpolation via the complex method concerns $L^p$ spaces with a change of measures. We state it in our setting of the upper half-plane $\Pi^+.$

\begin{thm}\label{change} \cite{BL, SW1}
Let $1\leq p_0,\hskip 1truemm p_1 \leq \infty.$ Given two positive measurable functions (weights) $\omega_0, \hskip 2truemm \omega_1$ on $(0, \infty),$ then for every $\theta \in (0, 1),$ we have
$$[L^{p_0} (\Pi^+, \hskip 1truemm \omega_0 (y)dxdy),  L^{p_1} (\Pi^+ , \hskip 1truemm \omega_1 (y)dxdy)]_\theta=L^p (\Pi^+, \hskip 1truemm \omega (y) dxdy)$$
with equal norms, provided that
$$\frac{1}{p}=\frac{1-\theta}{p_0}+\frac{\theta}{p_1}$$
$$\omega^\frac{1}{p}=\omega_0^{\frac{1-\theta}{p_0}}\omega_1^{\frac{\theta}{p_1}}.$$
\end{thm}

We obtain the following theorem.
\begin{thm}
We suppose that $s>1$ is not an integer. Let $n$ be the integer such that $n<s<n+\frac 12$ (resp. $n+\frac 12 <s<n+1$). Then for every $F\in A^2 (\Pi^+),$ we have the estimate
$$\int_{\Pi^+} |F(x+iy)|^{2s}y^{2s-2}dxdy$$
$$\leq \frac {\pi^{1-s}}{(2n-1)^{-2s+2n+1}(2n)^{2(s-n)}2^{2s-2}}\left (\int_{\Pi^+} |F(x+iy)|^2dxdy \right )^s$$
\rm {(}resp.
$$\int_{\Pi^+} |F(x+iy)|^{2s}y^{2s-2}dxdy$$
$$\leq \frac {\pi^{1-s}}{(2n-1)^{-2s+2n+1}(2n)^{2(s-n)}2^{2s-1}}\left (\int_{\Pi^+} |F(x+iy)|^2dxdy \right )^s).$$
\end{thm}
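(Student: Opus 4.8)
The plan is to interpolate between the sharp integer-exponent inequalities that we already know. Specifically, Theorem 1.1 (equivalently Theorem 2.5, $\Phi(n)=1$) gives, for each integer $m\ge 1$, that the map $F\mapsto F$ is a norm-$\le 1$ embedding of $A^2(\Pi^+)$ into $A^{2m}_{2m-2}(\Pi^+)$ after one rescales by the constant $C_m^{1/(2m)}$. The idea is: fix $F\in A^2(\Pi^+)$ with $\|F\|_{A^2}=1$, raise it to a fixed power so that the ambient space is a genuine $L^p$ with a $y$-power weight, and then apply Theorem 5.6 (complex interpolation of weighted $L^p$-spaces with change of measure) between the two consecutive integer exponents that bracket $s$. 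Concretely, if $n<s<n+1$ I would interpolate the $L^p$-bounds coming from the exponents $2n-2$ (weight $y^{2n-2}$, integrability exponent giving $L^{?}$) and $2n$, choosing the interpolation parameter $\theta\in(0,1)$ so that the resulting space is exactly $L$-with-weight $y^{2s-2}$ and exponent $2s$; the constraint $\frac1p=\frac{1-\theta}{p_0}+\frac{\theta}{p_1}$ forces a specific $\theta$, and the weight-matching condition $\omega^{1/p}=\omega_0^{(1-\theta)/p_0}\omega_1^{\theta/p_1}$ then has to be checked to produce $y^{2s-2}$ with the right power.

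The reason the two cases ($n<s<n+\frac12$ versus $n+\frac12<s<n+1$) appear with different constants is that the relevant pair of integer endpoints is not always $(n,n+1)$: for $s$ just above an integer $n$ one wants to interpolate between $s=n$ and the half-integer-type endpoint, but since we only have the \emph{integer} result at our disposal here, the natural endpoints are instead the integers $n$ and $n+1$, and the bookkeeping in the exponent $2s-2$ of the $y$-weight versus the exponent $2s$ of $|F|$ is not symmetric — hence the slightly awkward exponents $(2n-1)^{-2s+2n+1}(2n)^{2(s-n)}$ in the power of the two constants $C_n$, $C_{n+1}$, and the two different powers of $2$ in the denominator ($2^{2s-2}$ versus $2^{2s-1}$). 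I would carry out the computation by writing $C_n^{\,a}C_{n+1}^{\,b}$ with $a+b$ determined by matching $\pi^{1-s}$ and $a,b$ by matching the $s$-linear part, and then simplifying $(2n-1)^{?}(2n+1)^{?}2^{?}$; the half-way point $s=n+\frac12$ is where the dominant endpoint switches, which is exactly what the \emph{resp.} split records.

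The key steps in order: (1) reduce to $\|F\|_{A^2}=1$ and rewrite the target quantity $\int|F|^{2s}y^{2s-2}$ as $\|\,|F|^{?}\,\|$ in an $L^p$-space with $y$-power weight so that Theorem~\ref{change} applies — here one should raise $F$ to a fixed large integer power (or work with $|F|^{2}$ as the base object and treat $\int(|F|^2)^{\sigma}y^{\ldots}$), making sure the endpoint exponents $2n$ and $2n+2$ both become exponents $\ge 1$ of that base object; (2) record the two endpoint inequalities from Theorem 1.1 as statements about the $L^{p_0}$- and $L^{p_1}$-norms of that base object against the respective weights, with the sharp constants $C_n$ and $C_{n+1}$; (3) solve for $\theta$ from $\frac1p=\frac{1-\theta}{p_0}+\frac{\theta}{p_1}$, check the weight-matching identity reproduces $y^{2s-2}$, and read off the interpolated constant as $C_n^{1-\theta}C_{n+1}^{\theta}$ up to the power coming from step (1); (4) simplify the resulting constant into the stated closed form, separating the cases $n<s<n+\frac12$ and $n+\frac12<s<n+1$.

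The main obstacle I expect is step (1)/(3): the complex interpolation theorem as stated is for genuine $L^p$ spaces, whereas the integer inequalities live on the \emph{Bergman} (holomorphic) subspaces, so one must be slightly careful that interpolating the \emph{inequality} $\|F\|_{A^{2m}_{2m-2}}\le C_m^{1/(2m)}\|F\|_{A^2}$ is legitimate — the clean way is to fix $F$ and interpolate the single analytic family $z\mapsto F$ regarded as an element of the various $L^p(\Pi^+,\omega\,dxdy)$, i.e. apply Theorem~\ref{change} to the constant-in-$\theta$ function $F$, so that no holomorphy of the interpolation space is needed. The other fiddly point is purely arithmetic: getting the exponents of $(2n-1)$, $(2n)$ and $2$ exactly right, and verifying at $s=n+\frac12$ that both formulas agree (they must, by continuity), which is the natural consistency check for the \emph{resp.} split.
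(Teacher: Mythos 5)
Your overall framework is right — fix $F$, view the identity as a map from $A^2(\Pi^+)$ into a weighted $L^p(\Pi^+, y^\alpha\,dx\,dy)$, and apply the Stein--Weiss/Bergh--L\"ofstr\"om change-of-measure interpolation (Theorem~5.5) to two known endpoint bounds. Where the proposal goes wrong, and irreparably so, is in the choice of endpoints. You write that ``since we only have the \emph{integer} result at our disposal here, the natural endpoints are instead the integers $n$ and $n+1$,'' and you explain the \emph{resp.}~split as an artifact of the asymmetry between the exponent on $|F|$ and the exponent on the $y$-weight. Both claims are incorrect. The paper also has the \emph{half-integer} result at its disposal: Corollary~\ref{cor:bayart} (Bayart--Brevig--Haimi--Ortega-Cerd\`a--Perfekt) gives $\|f\|_{B^2_2}\geq\|f\|_{B^3_3}\geq\|f\|_{B^4_4}\geq\cdots$, which, transferred to $\Pi^+$, gives the sharp constant $C_{m/2}$ for \emph{all} $m=2,3,4,\dots$, i.e.\ for $s=1,\tfrac32,2,\tfrac52,\dots$. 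The paper therefore interpolates between consecutive \emph{half-integer} values: for $n<s<n+\tfrac12$, between $p_0=2n$ (weight $y^{2n-2}$, constant $C_n$) and $p_1=2n+1$ (weight $y^{2n-1}$, constant $C_{n+1/2}=\tfrac{\pi^{1/2-n}}{(2n)\,2^{2n-1}}$); for $n+\tfrac12<s<n+1$, between $p_0=2n+1$ and $p_1=2n+2$. That is what produces the \emph{resp.}~split at $s=n+\tfrac12$ and the factor $(2n)^{2(s-n)}$ (coming from the half-integer endpoint $C_{n+1/2}$) in the constant.

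If you instead interpolate only between the integers $n$ and $n+1$ as proposed, i.e.\ $p_0=2n$, $p_1=2n+2$ with weights $y^{2n-2}$ and $y^{2n}$, the arithmetic gives $\theta=(n+1)(s-n)/s$ and an interpolated constant equal to $C_n^{\,n+1-s}\,C_{n+1}^{\,s-n}=\dfrac{\pi^{1-s}}{(2n-1)^{n+1-s}(2n+1)^{s-n}\,2^{2s-2}}$. This is a single smooth formula on all of $(n,n+1)$ with no break at $s=n+\tfrac12$, so it cannot match the stated theorem; and one checks (write $s=n+\tau$) that $(2n-1)^{1-\tau}(2n+1)^{\tau}<(2n-1)^{1-2\tau}(2n)^{2\tau}$ precisely because $\log\tfrac{4n^2}{4n^2-1}>0$, so the integer-only interpolation yields a strictly \emph{larger} upper bound than the theorem asserts. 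In short, the half-integer endpoint is not a luxury but is necessary both for the constant and for the case split; without appealing to Corollary~\ref{cor:bayart} your route proves a genuinely weaker statement. As a minor point, the detour in your step~(1) about ``raising $F$ to a fixed power so that the ambient space is a genuine $L^p$'' is unnecessary: the paper simply applies Theorem~5.5 to the identity operator from $A^2(\Pi^+)$ to $L^{p_i}(\Pi^+,\omega_i\,dx\,dy)$, with no need to change the base function.
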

\begin{proof}
We provide the proof for $n<s<n+\frac 12.$ In Theorem 5.5, we take $p_0=2n, \hskip 1truemm p_1=2n+1, \hskip 1truemm p=2s, \hskip 1truemm \omega_0 (y)=y^{2n-2}, \hskip 1truemm \omega_1 (y)=y^{2n-1}.$ Let $\theta\in (0, 1)$ be defined by
\begin{equation}\label{thetabis}
\frac 1s=\frac {1-\theta}n+\frac \theta{n+\frac 12}.
\end{equation}
We check easily that in the notations of Theorem 5.5, we have $\omega (y)=y^{2s-2}.$
We can then apply Theorem 5.5. First, we have trivially $\left [A^2 (\Pi^+), A^2 (\Pi^+)\right ]_\theta=A^2 (\Pi^+).$ Next, if we consider the identity operator $i(F)=F,$ then by Corollary \ref{cor:bayart}, $i$ is bounded from $A^2 \left (\Pi^+\right )$ to $L^{2n} \left (\Pi^+, y^{2n-2}dxdy\right )$ with operator norm $\left (\frac {\pi^{1-n}}{(2n-1)2^{2n-2}}\right )^{\frac 1{2n}}$ and is bounded from $A^2 \left (\Pi^+\right )$ to $L^{2n+1} \left (\Pi^+, y^{2n-1}dxdy\right )$ with operator norm $\left (\frac {\pi^{\frac 12-n}}{(2n)2^{2n-1}}\right )^{\frac 1{2n+1}},$ we obtain that
\begin{equation}\label{improved}
\left (\int_{\Pi^+} |F(x+iy)|^{2s}y^{2s-2}dxdy\right )^{\frac 1{2s}}
\end{equation}
$$\leq \left (\left (\frac {\pi^{1-n}}{(2n-1)2^{2n-2}}\right )^{\frac 1{2n}}\right )^{1-\theta}\left (\left (\frac {\pi^{\frac 12-n}}{(2n)2^{2n-1}}\right )^{\frac 1{2n+1}}\right )^{\theta}\left (\int_{\Pi^+} |F(x+iy)|^2dxdy \right )^{\frac 12}.$$
We deduce from (\ref{thetabis}) that $\theta=(1-\frac ns)(2n+1)$ and $1-\theta=-2n+\frac ns(2n+1).$ Replacing in (\ref{improved}) gives the announced estimate.
\end{proof}

\begin{rem}
\begin{enumerate} 
\item
The estimate in Theorem 5.6 clearly improves the one in Corollary 5.1.
\item
Corollary 5.6 also implies that the conjecture of Lieb-Solovej is asymptotically true, as proved in Proposition 2.7.
\end{enumerate}
\end{rem}

We next show that the bound in Theorem 5.6 is greater than the bound in the Lieb-Solovej conjecture when $s$ is not an integer. It also makes more precise the constant $\Phi (s)$ in Proposition 2.7. We recall that the Lieb-Solovej conjecture says that $\Phi (s)=1.$ In fact, Theorem 5.6 provides a more accurate upper bound of $\Phi (s)$ which lies in $\left (1, \frac {2s-1}{2n-1} \right )$ when $s\in (n, n+1);$ explicitly,
$$\Phi (s)\leq \frac {2s-1}{(2n-1)^{-2s+2n+1}(2n)^{2(s-n)}}.$$
These two assertions are consequences of the following elementary lemma.

\begin{lemma}
For $s\in (n, n+1),$ the following double inequality holds
$$\frac 1{2s-1}<\frac 1{(2n-1)^{-2s+2n+1}(2n)^{2(s-n)}}<\frac 1{2n-1}.$$
\end{lemma}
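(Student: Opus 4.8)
The plan is to reduce the double inequality to a convexity statement about the exponent $2s-1$ viewed as a function of $s$ on the interval $[n,n+1]$. Write $\theta := 2(s-n) \in (0,1)$, so that $-2s+2n+1 = 1-\theta$ and the middle denominator becomes $(2n-1)^{1-\theta}(2n)^{\theta}$, a geometric mean (weighted with weights $1-\theta$ and $\theta$) of the two integers $2n-1$ and $2n$. Meanwhile the left denominator is $2s-1 = (2n-1)+\theta$, which is exactly the arithmetic mean $(1-\theta)(2n-1)+\theta(2n)$ of the same two integers with the same weights. Thus the desired chain is precisely
$$
(2n-1)^{1-\theta}(2n)^{\theta} \;<\; (1-\theta)(2n-1)+\theta(2n) \;<\; 2n,
$$
i.e. weighted-AM--GM on one side and the trivial bound on the other.

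First I would dispose of the right-hand inequality: since $\theta \in (0,1)$ one has $(1-\theta)(2n-1)+\theta(2n) = 2n-1+\theta < 2n$, which is immediate, and it is strict because $\theta<1$ (equivalently $s<n+1$). For the left-hand inequality I would invoke the strict weighted arithmetic-geometric mean inequality: for distinct positive reals $a=2n-1$ and $b=2n$ and weights $1-\theta,\theta$ with $0<\theta<1$ one has $a^{1-\theta}b^{\theta} < (1-\theta)a+\theta b$, strictly because $a\neq b$. Since $n\geq 1$ forces $a=2n-1\geq 1>0$ and $a\neq b$, and since $s\in(n,n+1)$ gives $\theta\in(0,1)$ strictly, both hypotheses hold and the strict inequality follows. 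Taking reciprocals reverses both inequalities and yields exactly the asserted
$$
\frac{1}{2s-1}<\frac{1}{(2n-1)^{-2s+2n+1}(2n)^{2(s-n)}}<\frac{1}{2n-1}.
$$

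There is really no serious obstacle here; the only point requiring a word of care is the edge case $n=1$, where $a=2n-1=1$: the AM--GM inequality is still valid and strict since $a=1\neq 2=b$, so $1^{1-\theta}2^{\theta}=2^{\theta}<1-\theta+2\theta=1+\theta$, consistent with the claim. One could alternatively give a self-contained proof of the left inequality by checking that $u\mapsto \log u$ is strictly concave, so that $\log\big((1-\theta)a+\theta b\big)>(1-\theta)\log a+\theta\log b = \log\big(a^{1-\theta}b^{\theta}\big)$; exponentiating gives the claim. Either route makes the lemma a one-line consequence of standard convexity, and hence the refined bound $\Phi(s)\le \frac{2s-1}{(2n-1)^{-2s+2n+1}(2n)^{2(s-n)}}\in\big(1,\tfrac{2s-1}{2n-1}\big)$ announced before the lemma follows immediately by combining it with Theorem 5.6 and Proposition 2.7.
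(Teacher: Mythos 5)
Your approach---rewriting the middle denominator as a weighted geometric mean $(2n-1)^{1-\theta}(2n)^{\theta}$ and invoking strict weighted AM--GM---is mathematically the same as the paper's, which appeals to strict concavity of $t\mapsto\log(2t-1)$ evaluated at $t_0=n$, $t_1=n+\tfrac12$: writing $s=(1-\theta)n+\theta(n+\tfrac12)$ with $\theta=2(s-n)$, concavity gives $\log(2s-1)>(1-\theta)\log(2n-1)+\theta\log(2n)$, which after exponentiating and taking reciprocals is exactly the left half of the claimed chain, and the right half is the trivial bound.

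However, there is a genuine gap, and it occurs in both your argument and the paper's. You assert that $\theta:=2(s-n)\in(0,1)$ for $s\in(n,n+1)$, but in fact $2(s-n)\in(0,2)$ on that interval; the constraint $\theta\in(0,1)$, which is needed so that both weights $1-\theta$ and $\theta$ are positive and AM--GM (equivalently, concavity of $\log$) applies, is equivalent to $s\in(n,n+\tfrac12)$. For $s\in(n+\tfrac12,n+1)$ the exponent $1-\theta=-2s+2n+1$ is negative, AM--GM is inapplicable, and indeed the stated inequality is false: taking $n=1$, $s=\tfrac74$ gives $(2n-1)^{-2s+2n+1}(2n)^{2(s-n)}=1^{-1/2}\cdot 2^{3/2}=2\sqrt2\approx 2.83$, whereas $2s-1=\tfrac52=2.5$, so the right inequality fails. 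The lemma should be restricted to $s\in(n,n+\tfrac12)$, which is consistent with the first case of Theorem 5.6; the complementary range $s\in(n+\tfrac12,n+1)$ would require a parallel statement with $2n$ and $2n+1$ (coming from interpolating between $A^{2n+1}$ and $A^{2n+2}$) in place of $2n-1$ and $2n$. You should flag the incorrect assertion $\theta\in(0,1)$ and either restrict the range of $s$ or formulate and prove the companion inequality for the other half-interval.
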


\begin{proof}
The second inequality is obvious. The second inequality follows easily from the strict concavity of the function $t\mapsto \log (2t-1).$
\end{proof}

\subsection*{Acknowledgments}  The authors express their sincere thanks to Chao-Ping Chen for showing them his proof of Lemma 3.3 and to Joaquim Ortega-Cerd\`a for indicating us the references \cite{BBHOP}, \cite{BOSZ} and \cite{B}. Corollary 2 of \cite{BBHOP} led to an improvement of Theorem 5.5 and the related bound.

\end{document}